\setlist{topsep=3pt,partopsep=0pt,itemsep=1pt,parsep=0pt}
\newcommand{\floor}[1]{\left\lfloor #1 \right\rfloor}
\newcommand{\ceil}[1]{\left\lceil #1 \right\rceil}
\newcommand{\tceil}[1]{\lceil #1 \rceil}
\newcommand{\tfloor}[1]{\lfloor #1 \rfloor}
\renewcommand{\mod}[1]{\pmod{#1}}
\newtheorem{theorem}{Theorem}
\newtheorem{conjecture}{Conjecture}
\newtheorem{lemma}{Lemma}[section]
\newtheorem{definition}[lemma]{Definition}
\def \E {\mathcal{E}}
\def \A {\mathcal{A}}
\def \B {\mathcal{B}}
\def \F {\mathcal{F}}
\def \SP {{\rm SP}}
\def \L {{\rm LL}}
\def \erf {{\rm erf}}
\def \e {\varepsilon}
\def \MMS {{\rm MMS}}
\def \leq {\leqslant}
\def \geq {\geqslant}
\let\oldproofname=\proofname
\renewcommand{\proofname}{\rm\bf{\oldproofname}}
\title{More constructions for Sperner partition systems}
\author{Adam Gowty \qquad  Daniel Horsley\\[2mm]
School of Mathematics, Monash University, Victoria 3800, Australia}
\date{}
\begin{document}
\maketitle
\setstretch{1.2}
\begin{abstract}
An $(n,k)$-\emph{Sperner partition system} is a set of partitions of some $n$-set such that each partition has $k$ nonempty parts and no part in any partition is a subset of a part in a different partition. The maximum number of partitions in an $(n,k)$-Sperner partition system is denoted $\SP(n,k)$. In this paper we introduce a new construction for Sperner partition systems based on a division of the ground set into many equal-sized parts. We use this to asymptotically determine $\SP(n,k)$ in many cases where $\frac{n}{k}$ is bounded as $n$ becomes large. Further, we show that this construction produces a Sperner partition system of maximum size for numerous small parameter sets $(n,k)$. By extending a separate existing construction, we also establish the asymptotics of $\SP(n,k)$ when $n \equiv k \pm 1 \mod{2k}$ for almost all odd values of $k$.
\end{abstract}

\section{Introduction}
A \emph{Sperner set system} is a collection of subsets of some ground set where no set in the collection is a subset of another. Sperner partition systems are a natural variant of Sperner set systems which were first introduced by Meagher, Moura, and Stevens in \cite{MeaMouSte}. An $(n,k)$-\emph{Sperner partition system} is a set of partitions of some $n$-set such that each partition has $k$ nonempty parts and no part in any partition is a subset of a part in a different partition. As with many similar problems, research related to Sperner partition systems focuses on studying the size of the largest possible system for a given choice of $n$ and $k$, which is denoted $\SP(n,k)$.

For a pair of integers $(n,k)$ with $n \geq k \geq 1$, we define $c=c(n,k)$ and $r=r(n,k)$ as the unique integers such that $n=ck+r$ and $r \in \{0,\ldots,k-1\}$. These definitions for $c(n,k)$ and $r(n,k)$ are used throughout the paper and are abbreviated to simply $c$ and $r$ where there is no danger of confusion. In \cite{MeaMouSte} the authors were able to show using the LYM inequality (see \cite[p. 25]{FraTok} for example)
that $\SP(n,k)\leq \MMS(n,k)$, where
\[\MMS(n,k) = \frac{\binom{n}{c}}{k-r+\frac{r(c+1)}{n-c}}.\]
It is also observed in \cite{MeaMouSte} that this upper bound, together with Baranyai's theorem \cite{baranyaifactorization},  implies that $\SP(n,k) =\MMS(n,k)= \binom{n-1}{c-1}$ when $n=ck$. Furthermore, it is trivial to see $\SP(n,k) = 1$ when $k=1$ and in  \cite{LiMeagher} it is proved that $\SP(n,2) = \binom{n-2}{\floor{n/2}-1}$, so only cases with $k \geq 3$ remain unresolved.

In many cases where $c$ grows along with $n$, Chang et. al. \cite{Paper1} constructed Sperner partition systems with number of partitions asymptotic to $\MMS(n,k)$.

\begin{theorem}[\cite{Paper1}]\label{t:asymptotic}
Let $n$ and $k$ be integers with $n \rightarrow \infty$, $k=o(n)$ and $k \geq 3$, and let $c$ and $r$ be the integers such that $n=ck+r$ and $r \in \{0,\ldots,k-1\}$. Then $\SP(n,k)\sim\MMS(n,k)$ if
\begin{itemize}
    \item
$n$ is even and $r \notin \{1,k-1\}$; or
    \item
$k-r \rightarrow \infty$.
\end{itemize}
\end{theorem}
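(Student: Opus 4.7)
My plan is to construct, for each pair $(n,k)$ satisfying the hypotheses, an $(n,k)$-Sperner partition system of size $(1-o(1))\MMS(n,k)$; combined with the cited LYM-based upper bound this establishes $\SP(n,k)\sim\MMS(n,k)$. Since the LYM bound is sharpest when parts have the most balanced sizes, I aim for partitions in which exactly $k-r$ parts have size $c$ and $r$ parts have size $c+1$---any non-negligible deviation from this shape would already lose a constant factor in the LYM accounting.

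For the case $k-r\to\infty$, I would use an absorber-type construction. Choose $W\subseteq[n]$ with $|W|$ selected so that $V:=[n]\setminus W$ has size divisible by $c$ and so that $|W|$ supplies exactly the elements needed to promote $r$ parts of each partition from size $c$ to size $c+1$. Apply Baranyai's theorem to the complete $c$-uniform hypergraph on $V$ to obtain a resolution into $\binom{|V|-1}{c-1}$ parallel classes, each a partition of $V$ into $|V|/c$ disjoint $c$-sets. For each parallel class, promote $r$ of its $c$-sets to $(c+1)$-sets by absorbing one element of $W$ apiece, using an auxiliary near-resolvable design on $W$ to cycle through the absorber assignments. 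The Sperner condition is automatic across partitions because every $(c+1)$-part meets $W$ whereas no $c$-part does, so no $c$-part can sit inside any $(c+1)$-part. Since $k-r\to\infty$ forces $|W|=o(n)$, a direct count shows that restricting the $(c+1)$-parts to contain a $W$-element costs only a $(1+o(1))$ factor, yielding $(1-o(1))\MMS(n,k)$ partitions.

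For the case $n$ even and $r\notin\{1,k-1\}$, the absorber construction is wasteful because $k-r$ need not grow, so $|W|$ is not negligible. Here I would instead use a near-one-factorization construction: the parity hypothesis on $n$ guarantees one-factorizations of the auxiliary graphs needed to distribute the $r(c+1)$ promoted elements uniformly across partitions without creating containments, while the exclusions $r\ne1$ and $r\ne k-1$ keep both parts of each partition plentiful enough to avoid the degenerate cases where either the $c$-parts or the $(c+1)$-parts nearly exhaust $[n]$. The main obstacle throughout is the Sperner condition between part-sizes: generically, a $c$-part of one partition is contained in many $(c+1)$-parts of other partitions, and naive random placements would cost a constant factor in the attainable count. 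Engineering the absorber set $W$ or the one-factorization so that these containments never materialise, while keeping the construction tight enough to match $\MMS(n,k)$ asymptotically, is the core difficulty, and it is precisely the hypotheses of the theorem that make this engineering feasible.
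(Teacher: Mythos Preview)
This theorem is quoted from \cite{Paper1} and is not proved in the present paper, so there is no in-paper proof to compare against directly. From the way the present paper extends the construction of \cite{Paper1} (see Lemma~\ref{l:colouring} and Sections~\ref{S:th4a}--\ref{S:th4b}), one can infer that the original argument is built on the edge-colouring framework of Lemma~\ref{l:colouring}: for even $n$ split the ground set into two halves of size $n/2$, classify $c$- and $(c+1)$-subsets by their intersection profile with the halves, and assemble partitions by colouring a suitable clutter. Your proposal takes a quite different route, and it has a genuine quantitative gap.

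In your absorber construction for the case $k-r\to\infty$, you take $|W|=r$ and $|V|=ck$, and Baranyai then yields exactly $\binom{ck-1}{c-1}=\tfrac{1}{k}\binom{ck}{c}$ partitions; each parallel class can be used only once because the unpromoted $c$-sets would otherwise repeat and violate the Sperner condition. But
\[
\frac{\MMS(n,k)}{\binom{ck-1}{c-1}}
   \;=\;\frac{k}{\,k-r+\frac{r(c+1)}{n-c}\,}\cdot\frac{\binom{n}{c}}{\binom{ck}{c}},
\]
and when $r$ is comparable to $k$ this ratio does not tend to $1$. For a concrete family take $k\to\infty$, $r=\lfloor k/2\rfloor$, and let $c$ grow with $k$; then $k-r\to\infty$, the first factor above tends to $2$, and the second factor tends to $\sqrt{e}$ (since $\log\bigl(\binom{ck+r}{c}/\binom{ck}{c}\bigr)\sim r\log\frac{k}{k-1}\to\tfrac12$). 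Thus your system is smaller than $\MMS(n,k)$ by an asymptotic factor of about $2\sqrt{e}$. The faulty step is the sentence ``$k-r\to\infty$ forces $|W|=o(n)$, so restricting the $(c+1)$-parts to contain a $W$-element costs only a $(1+o(1))$ factor'': it is true that $|W|=r=o(n)$, but the loss in the count is governed by $r/k$, not by $r/n$, and $k-r\to\infty$ does not force $r=o(k)$. Your absorber approach matches $\MMS$ only under the strictly stronger hypothesis $r=o(k)$.

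Your treatment of the first bullet ($n$ even, $r\notin\{1,k-1\}$) is too schematic to assess: you have not specified the vertex set or edges of the auxiliary graph whose one-factorization you invoke, nor how its one-factors translate into partitions of $[n]$, so neither the Sperner condition nor the final count can be checked from what is written.
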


The condition $k=o(n)$ in Theorem~\ref{t:asymptotic} is equivalent to saying $c \rightarrow \infty$, so Theorem~\ref{t:asymptotic} does not cover the case where $c$ is bounded as $n$ grows. Certain very specific cases in the regime where $c$ is bounded have been investigated. When $c=1$, it is not hard to see that $\SP(n,k)=1$. For $c=2$, Li and Meagher \cite{LiMeagher} found bounds on $\SP(2k+1,k)$, $\SP(2k+2,k)$ and $\SP(3k-1,k)$, and in \cite{Paper1} it was shown that $\SP(3k-6,k)=\lfloor\frac{1}{2}(k-2)^2\rfloor$ for $k \geq 11$ and $k \not\equiv 4 \mod{6}$. Our first main contribution in this paper gives an  asymptotic determination of $\SP(n,k)$ when $c$ is bounded, except in cases where $r$ is very close to $k$.

\begin{theorem}\label{t:main}
Let $n$ and $k$ be integers with $n \rightarrow \infty$, $k \leq \frac{n}{2}$ and $k-r=\Theta(n)$ where $c$ and $r$ are the integers such that $n=ck+r$ and $r \in \{0,\ldots,k-1\}$. Then $\SP(n,k)\sim\MMS(n,k)$.
\end{theorem}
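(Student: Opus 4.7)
The upper bound $\SP(n,k) \leq \MMS(n,k)$ is already available from \cite{MeaMouSte} via the LYM inequality, so the task reduces to constructing an $(n,k)$-Sperner partition system of size $(1-o(1))\MMS(n,k)$ in the stated regime. Note first that the hypotheses force $c$ to be a bounded integer with $c \geq 2$: the assumption $k \leq n/2$ gives $c \geq 2$, and $n = ck+r$ combined with $k-r = \Theta(n)$ precludes $c \to \infty$ (else $n/k \to \infty$). Consequently $k = \Theta(n)$ and $r = \Theta(n)$, so the denominator of $\MMS(n,k)$ is $\Theta(n)$ and $\MMS(n,k) = \Theta(n^{c-1})$; each partition in the system must have $k-r$ parts of size $c$ and $r$ parts of size $c+1$. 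The constructions behind Theorem~\ref{t:asymptotic} do not apply here since they require $c \to \infty$, so something new is needed.

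My plan is to invoke the new construction announced in the abstract. I would partition $[n]$ into $m$ equal-sized blocks $V_1,\ldots,V_m$ of common size $t = n/m$, with $m$ chosen (as a slowly growing function of $n$) so that $t$ is a convenient multiple of $c(c+1)$. Inside each $V_i$, a Baranyai-type decomposition yields many partitions of $V_i$ into pieces of sizes $c$ and $c+1$. Each partition in the Sperner system is then assembled by selecting, in each block, one such local partition and combining the local pieces across the $V_i$ into the $k$ globally required parts, with the correct number of each size. The mechanism for the Sperner property is that each part carries a ``signature'' given by the multiset of its intersections with the blocks $V_1,\ldots,V_m$; by indexing partitions via a sufficiently rich auxiliary structure on the blocks (for instance a resolvable design, or an orthogonal array), one ensures that any two parts from different partitions have incomparable signatures, hence neither can contain the other.

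The main obstacle I expect is the simultaneous optimisation of $m$ and $t$ against two competing demands. Small $m$ leaves the blocks large and rich with internal Baranyai decompositions, but produces few signatures and hence few global partitions; large $m$ multiplies signatures but leaves the blocks too small to support enough internal variety. The heart of the argument should therefore be to tune $m$ so that the number of partitions constructed is $(1-o(1))\binom{n}{c}/(k-r)$, after which a direct comparison with
\[
\MMS(n,k) \;=\; \frac{\binom{n}{c}}{k-r+\frac{r(c+1)}{n-c}}
\]
yields the claimed asymptotic, since $r(c+1)/(n-c) = O(1)$ is absorbed by $k-r = \Theta(n)$ in the denominator. Verifying the Sperner property at this scale will require a careful combinatorial check that cross-block containment cannot occur, which I expect to be the technically hardest step.
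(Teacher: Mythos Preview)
Your reduction to the lower bound is right, and so is the observation that $c$ is bounded with $c\geq 2$ and $k=\Theta(n)$. However, the claim ``$r=\Theta(n)$'' is false: the hypothesis $k-r=\Theta(n)$ says nothing about $r$ being bounded below, and in fact $r$ may be $0$ or very small. The paper handles this by splitting off a separate case. When $r\leq k^{(2c-1)/(2c)}$, so $r=o(k)$, one simply uses monotonicity $\SP(ck,k)\leq\SP(n,k)$ together with the exact value $\SP(ck,k)=\binom{ck-1}{c-1}$ and checks $\binom{ck-1}{c-1}\sim\MMS(n,k)$. Your sketch has no analogue of this.

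More seriously, your proposed construction does not match the paper's and, as you describe it, does not deliver the Sperner property. If each part of each partition is a ``local piece'' contained in a single block $V_i$, then a $c$-set from one partition can certainly sit inside a $(c+1)$-set from another partition in the same block; the ``signature'' (which block a set lives in) carries no information to prevent this, and no auxiliary design on the blocks changes that. The paper's construction is structurally different: the ground set is split into $m$ blocks of size $h$ with $m\equiv 0\pmod c$, the $(c+1)$-sets are taken inside a single block as you suggest, but the $c$-sets are \emph{transversals} hitting $c$ distinct blocks in one point each, the $c$-tuples of blocks coming from a Baranyai resolution of $\binom{\{1,\dots,m\}}{c}$. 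This makes the family of $c$-sets and $(c+1)$-sets a clutter automatically (a transversal across $c$ blocks cannot lie inside a set confined to one block, and vice versa), with no further auxiliary structure needed. The passage from this clutter to an actual Sperner partition system is then handled not by a direct combinatorial argument but by the colouring lemma (Lemma~\ref{l:colouring}), a consequence of Bryant's theorem: one only needs to exhibit a partial edge colouring with the right degree profile on each block, which is what Lemma~\ref{l:mainConstruction} does. Finally, the tuning you anticipate is carried out by choosing $h\approx\bigl((c+1)rn^{c-1}/(k-r)\bigr)^{1/c}$, precisely so that the two bounds $p_1$ and $p_2$ in Lemma~\ref{l:mainConstruction} are asymptotically equal; this is where the assumption that $r$ is not too small is used.
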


Note that the condition $k-r=\Theta(n)$ implies $k=\Theta(n)$ and hence that $c=O(1)$. Theorem~\ref{t:main} is proved by introducing a new construction for Sperner partition systems which is based on a division of the ground set into many equal-sized parts (see Lemma~\ref{l:mainConstruction}). In the special case $c=2$ we are able to say more (see Section~\ref{S:c=2}), including determining $\SP(n,k)$ exactly for a number of small parameter sets $(n,k)$ and narrowing it down to one of two values for one infinite family.

\begin{theorem}\label{T:3ktake2case}
Let $k\geq 4$ be an even integer and let $n = 3k - 2$. Then $\SP(n,k) \in \{\binom{n/2}{2}, \binom{n/2}{2}+1\}$.
\end{theorem}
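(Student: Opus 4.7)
The plan is to prove the two bounds $\binom{n/2}{2} \leq \SP(n,k) \leq \binom{n/2}{2}+1$ separately. Note that with $n = 3k-2$ and $k$ even we have $c=2$ and $r=k-2$, so every partition in an $(n,k)$-Sperner partition system has exactly two pair parts (of size $2$) and $k-2$ triple parts (of size $3$), and $n$ itself is even.

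For the upper bound, I would let $N$ be the size of an optimal system and, for each $v \in [n]$, write $p_v$ and $t_v = N - p_v$ for the numbers of partitions in which $v$ lies in a pair part or a triple part respectively. Setting $W_v = \{w : \{v,w\} \text{ is a pair part of some partition}\}$, the Sperner condition forces every triple part $\{v, x, y\}$ through $v$ to satisfy $x, y \notin W_v$, and distinct such triples yield distinct $2$-subsets of $[n] \setminus (\{v\} \cup W_v)$, giving $t_v \leq \binom{n-1-p_v}{2}$. Setting $g(p) = p + \binom{n-1-p}{2}$ this yields $N \leq g(p_v)$ for every $v$. Combining with the identity $\sum_v p_v = 4N$, the argument then splits into two cases. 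If every $p_v \leq n/2 - 1$ then $4N \leq n(n/2 - 1)$, so $N \leq \binom{n/2}{2}$. Otherwise some $p_v \geq n/2$, and a short computation gives $g(n/2) = \binom{n/2}{2}+1$; since $g$ is a convex quadratic on $[0, n-1]$ and (using $k \geq 4$, so $n \geq 10$) its value at the other endpoint satisfies $g(n-1) = n-1 \leq \binom{n/2}{2}+1$, one has $g(p) \leq \binom{n/2}{2}+1$ throughout $[n/2, n-1]$, giving $N \leq \binom{n/2}{2}+1$ in this case too.

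For the lower bound I would equipartition $[n] = A \sqcup B$ with $|A| = |B| = n/2$ and fix a bijection $f: A \to B$. For each $\{a, a'\} \in \binom{A}{2}$ form a partition whose two pair parts are $\{a, f(a')\}$ and $\{a', f(a)\}$, and whose $k-2$ triple parts consist of $(k-2)/2$ triples within $A \setminus \{a, a'\}$ together with $(k-2)/2$ triples within $B \setminus \{f(a), f(a')\}$. The pair parts across the $\binom{n/2}{2}$ partitions are precisely the edges of $K_{A,B}$ other than those of the perfect matching $\{\{a, f(a)\} : a \in A\}$, so they are pairwise distinct; and since each triple part lies entirely inside $A$ or entirely inside $B$ while each pair part crosses the bipartition, no pair part is contained in any triple part. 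What remains is to choose the triples on each side so that all the all-$A$ triples used are pairwise distinct (and similarly on the $B$ side). Since $\binom{n/2}{2}(k-2)/2 = \binom{n/2}{3}$, each $3$-subset of $A$ must in fact be used exactly once, so one needs a resolvable decomposition of $\binom{A}{3}$ into $\binom{n/2}{2}$ near-parallel classes, the class indexed by $\{a, a'\}$ being a partition of $A \setminus \{a, a'\}$ into triples.

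The main obstacle is constructing this resolvable decomposition, a kind of near-$1$-factorization of $K_{n/2}^{(3)}$ whose factors are indexed by $\binom{A}{2}$. The counts balance exactly, but since $n/2 = (3k-2)/2 \equiv 2$ or $5 \pmod 6$ for even $k$, no Steiner triple system on $n/2$ points is available to anchor the construction; the case $k=4$ is immediate because each class has a single triple (the complement of the missing pair), but for larger even $k$ one must exhibit these classes directly, most plausibly via a cyclic $\mathbb{Z}_{n/2}$ (or more general group-theoretic) construction with carefully chosen base decompositions in each orbit. Once the decompositions are produced for all even $k \geq 4$, the $\binom{n/2}{2}$ partitions above form a valid Sperner partition system, and together with the upper bound this pins $\SP(n,k)$ to one of the two claimed values.
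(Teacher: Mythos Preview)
Your argument has a genuine gap in each half.

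For the upper bound, the assertion that ``every partition in an $(n,k)$-Sperner partition system has exactly two pair parts and $k-2$ triple parts'' is simply false: the definition of a Sperner partition system imposes no size constraint on the parts, and it is only a conjecture that $\SP(n,k)$ is always achieved by an almost uniform system. Your counting argument via $N \leq g(p_v)$ is correct and rather nice for almost uniform systems, but it does not bound $\SP(n,k)$. The paper handles the upper bound by invoking Theorem~\ref{t:upperBound}, a general bound from \cite{Paper1} valid for arbitrary Sperner partition systems, and deriving a contradiction from $\SP(n,k) \geq \binom{n/2}{2}+2$.

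For the lower bound, your construction is essentially the same as the paper's (split the ground set in half, use crossing pairs and within-half triples), but you correctly identify and then fail to resolve the main difficulty: the existence of a decomposition of $\binom{A}{3}$ into near-parallel classes indexed by $\binom{A}{2}$. This is not routine, and a cyclic construction will not be straightforward since $|A| \equiv 2$ or $5 \pmod 6$. The paper sidesteps this entirely: Lemma~\ref{l:mainConstruction}(b) with $m=2$ and $h=n/2$ gives the system directly, because that lemma rests on Lemma~\ref{l:colouring}, which in turn uses a result of Bryant to produce the Sperner partition system from a suitable partial edge colouring without ever exhibiting the decomposition explicitly. The relevant calculation is then just that $\tfrac{2}{k-2}\binom{n/2}{3} = \binom{n/2}{2}$, which determines $p$.
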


Theorem~\ref{t:asymptotic} also does not cover the cases where $r=1$ and $k$ is bounded or where $r=k-1$. Here we show that in most of these cases, if $n$ is even, $\SP(n,k)$ is indeed asymptotic to $\MMS(n,k)$.

\begin{theorem}\label{t:k=3Asymptotics}{~}
\begin{itemize}
    \item[\textup{(a)}]
Let $n$ and $k$ be integers such that $n \rightarrow \infty$ with $n \equiv k+1 \mod{2k}$, $k=o(n)$, and $k \geq 3$ is odd. Then $\SP(n,k) \sim \MMS(n,k)$.
    \item[\textup{(b)}]
Let $n$ and $k$ be integers such that $n \rightarrow \infty$ with $n \equiv k-1 \mod{2k}$, $k=o(n)$, and $k \geq 5$ is odd. Then $\SP(n,k) \sim \MMS(n,k)$.
\end{itemize}
\end{theorem}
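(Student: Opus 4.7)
My plan is to prove both parts by exhibiting explicit $(n,k)$-Sperner partition systems of size $(1-o(1))\MMS(n,k)$; the matching upper bound $\SP(n,k)\le\MMS(n,k)$ is already furnished by the LYM inequality, so only the construction side requires work.

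For part (a), where $n=ck+1$ with both $c$ and $k$ odd, I would start from a Baranyai decomposition of the complete $c$-uniform hypergraph on $[n-1]$, which exists since $c\mid n-1$, obtaining $\binom{n-2}{c-1}$ parallel classes partitioning $[n-1]$ into $k$ parts of size $c$. Each parallel class extends to a partition of $[n]$ of the required shape by adjoining the element $n$ to one chosen $c$-part, which then plays the role of the $(c+1)$-part. Any would-be containment $B\subseteq B'\cup\{n\}$ across two distinct such partitions forces $B=B'$ and hence places $B$ in two different parallel classes, contradicting Baranyai's decomposition property. A short asymptotic calculation shows $\binom{n-2}{c-1}/\MMS(n,k)\to k^2/((k-1)^2+1)$, which equals $1$ only in the subregime $k\to\infty$ already covered by Theorem~\ref{t:asymptotic}. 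To handle the bounded-$k$ regime, where the bare Baranyai construction falls short, I would enlarge the system by iterating the extension with several different vertices playing the role of the extra point and unioning the resulting families. The main obstacle here is coordinating the underlying Baranyai decompositions so that no $c$-part produced in one layer lies inside a $(c+1)$-part produced in another; I would arrange this by aligning the per-layer upgrades via an auxiliary resolvable structure on $[n]$ (for instance one derived from a $1$-factorization of $K_n$, available because $n$ is even).

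For part (b), where $n=(c+1)k-1$ and each partition has one $c$-part together with $k-1$ parts of size $c+1$, I would proceed dually. Adjoin an auxiliary element to obtain the ground set $[n+1]$, which satisfies $(c+1)\mid n+1$, and apply Baranyai to $K_{n+1}^{(c+1)}$ to obtain $\binom{n}{c}$ parallel classes partitioning $[n+1]$ into $k$ parts of size $c+1$. Deleting the auxiliary element from each class shrinks its containing part to a $c$-set of $[n]$, producing a partition of the desired shape. Unlike in part (a), these $\binom{n}{c}$ candidate partitions conflict in pairs: the shrunk $c$-part of one class may appear as a $c$-subset of a $(c+1)$-part of another. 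Since here $\MMS(n,k)=\binom{n}{c}/2$, LYM forces us to discard at least half of the candidates. I would exhibit an explicit Sperner-valid subfamily of size $(1-o(1))\binom{n}{c}/2$ by $2$-colouring the $(c+1)$-subsets of $[n+1]$ so that every conflicting pair of parallel classes straddles the two colour classes, and then keeping only the parallel classes of a single colour.

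In both parts the crux is the same: combining a Baranyai-based construction with a parity-sensitive auxiliary structure in order to simultaneously match the count $\MMS(n,k)$ and preserve the Sperner condition. I expect the bounded-$k$ multi-layer coordination in part (a) and the explicit half-selection in part (b) to be the principal technical hurdles. The hypotheses that $k$ is odd and that $n\equiv k\pm 1\pmod{2k}$ enter precisely through the existence of the required auxiliary combinatorial objects; the exclusion of $k=3$ from part (b) presumably reflects the degeneracy of the half-selection problem at that boundary, where the conflict structure on parallel classes becomes too dense to admit a large bipartite-compatible subfamily.
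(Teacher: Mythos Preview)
Your proposal has genuine gaps in both parts, and the paper's argument takes a completely different route.

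\textbf{Part (a).} You correctly observe that the bare Baranyai construction on $[n-1]$ yields only $\binom{n-2}{c-1}$ partitions, which is asymptotic to $\frac{(k-1)^2+1}{k^2}\,\MMS(n,k)$ and so is insufficient whenever $k$ is bounded (for $k=3$ you get only $5/9$ of the target). Your fix---layering with several distinguished vertices and ``aligning the per-layer upgrades via an auxiliary resolvable structure''---is not a proof but a hope. Even if you could make $\ell$ layers pairwise Sperner-compatible, each layer uses every $c$-subset of an $(n-1)$-set, so distinct layers share the vast majority of their $c$-classes; you have not indicated why the union contains more partitions than a single layer, let alone $(1-o(1))\MMS(n,k)$. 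The coordination you sketch (via a $1$-factorization) controls only the \emph{upgraded} $(c+1)$-sets; it says nothing about a $c$-part from layer $v$ sitting inside the upgraded part of layer $w$, which is the actual obstruction.

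\textbf{Part (b).} After Baranyai on $[n+1]$ and deletion of the auxiliary point, you need an independent set of size $(1-o(1))\binom{n}{c}/2$ in the conflict graph on the $\binom{n}{c}$ parallel classes. You propose to get this by $2$-colouring so that ``every conflicting pair straddles the two colour classes''---that is, you are asserting the conflict graph is bipartite. It is not, in general: each class has outgoing degree $n-c=(c+1)(k-1)$ and incoming degree $(k-1)(c+1)$, and there is no parity structure forcing even cycles. A greedy independent set in a graph of this degree gives only $\binom{n}{c}/\Theta(c)$ partitions, hopelessly far from half. Nothing in your outline bridges this gap, and your explanation of why $k=3$ is excluded is speculative and does not match the actual reason in the paper.

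\textbf{What the paper does instead.} For both parts the paper splits $X$ into two halves $X_1,X_2$ of size $n/2$ and takes the clutter whose $c$-edges are the ``unbalanced'' $c$-sets (those with at least $d+u+2$ points in one half) and whose $(c+1)$-edges are the ``balanced'' $(c+1)$-sets; the Sperner property is then automatic from the imbalance condition, with no coordination needed. Producing the partitions is encoded as an integer program whose optimal value bounds $\SP(n,k)$ from below via Lemma~\ref{l:colouring}. The real work is analytic: showing the natural upper bound $Q$ on the program is asymptotic to $\MMS(n,k)$ (a LYM/Stirling calculation) and then exhibiting near-optimal feasible solutions. The exclusion of $k=3$ in (b) arises because the feasibility argument needs $k-1\geq 4$ to beat a constant $\approx 1.98$; the paper conjectures, with numerical support, that the result still holds for $k=3$.
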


We prove Theorem~\ref{t:k=3Asymptotics} by extending a construction method used in \cite{Paper1} and analysing its behaviour. This extended construction method incorporates a solution to a particular integer program, where the objective value of the program gives the size of the Sperner partition system produced. With some effort, we are able to show that in most cases the optimal value of this integer program is asymptotic to $\MMS(n,k)$ and so prove Theorem~\ref{t:k=3Asymptotics}. In the case $n \equiv k-1 \mod{2k}$ and $k = 3$ we do not prove this, but we present strong numerical evidence that the optimal value of the integer program is such that the result still holds.

Sperner partition systems are of particular interest due to their relationship with detecting arrays (see \cite{Detecting} for example), which have extensive uses in the testing and location of faults. An $(n,k)$-Sperner partition system with $p$ partitions can be represented by an $n\times p$ array with $k$ symbols, where the $(i,j)$ entry is $\ell$ if and only if the $i$th element of the ground set is in the $\ell$th part of the $j$th partition under arbitrary orderings. In the language of \cite{Detecting}, such an array is a $(1,\Bar{1})$-\emph{detecting array} due to the fact that for any $j_1,j_2 \in \{1,\ldots,p\}$ and $\ell_1,\ell_2 \in \{1,\ldots,k\}$, the set of rows for which the symbol $\ell_1$ appears in column $j_1$ is not a subset of the set of rows for which the symbol $\ell_2$ appears in column $j_2$. As such, it is apparent that $\SP(n,k)$ also denotes the maximum number of columns in a $(1,\Bar{1})$-detecting array with exactly $n$ rows and $k$ symbols.

This paper is organised as follows. Section~\ref{S:prelim} introduces some notation we will require as well as a key result, a consequence of a result of \cite{Darryn}, that underlies our constructions. In Section~\ref{S:smallc} we introduce our new construction for Sperner partition systems, based on a division of the ground set into many equal-sized parts, and use this to prove Theorem~\ref{t:main}. In Section~\ref{S:c=2} we then examine the special case where $c=2$, in the process proving Theorem~\ref{T:3ktake2case} and exhibiting many small parameter sets for which the construction from the previous section produces Sperner partition systems of maximum size. Sections~\ref{S:th4a} and~\ref{S:th4b} are then devoted to proving Theorem~\ref{t:k=3Asymptotics}(a) and (b) respectively, using an extension of the construction for Sperner partition systems given in \cite[Lemmas~10 and 11]{Paper1}. In the conclusion, we provide some numerical evidence that Theorem~\ref{t:k=3Asymptotics}(b) also holds for $k=3$ and discuss possible further work.

\section{Preliminaries}\label{S:prelim}

An $(n,k)$-Sperner partition system is called \emph{almost uniform} if each part of each partition in the system is of size $\lfloor{\frac{n}{k}}\rfloor$ or $\lceil{\frac{n}{k}}\rceil$. Note that this means that there must be $k-r$ parts of size $c$ and $r$ parts of size $c+1$ in each partition. It is conjectured in \cite{MeaMouSte} that, for all $n$ and $k$ with $n \geq k \geq 1$, there is an almost uniform Sperner partition system with $\SP(n,k)$ partitions.

In \cite{LiMeagher}, the authors observe that taking an $(n,k)$-Sperner partition system and adding a new element to an arbitrary part of each partition results in an $(n+1,k)$-Sperner partition system of the same size. Thus we have
\begin{equation}\label{e:mono}
\SP(n+1,k) \geq \SP(n,k) \quad \text{for all integers $n \geq k \geq 1$},
\end{equation}
a fact that we will use frequently. If the original Sperner partition system is almost uniform and the new element is added to a part of minimum size in each partition, then the resulting $(n+1,k)$-Sperner partition system is also almost uniform. Although we do not state it explicitly each time, all the constructions in this paper produce almost uniform systems. For a set $S$ and a nonnegative integer $i$, we denote the set of all $i$-subsets of $S$ by $\binom{S}{i}$. Note that $\left|\binom{S}{i}\right| = \binom{|S|}{i}$.

A \emph{hypergraph} $H$ consists of a vertex set $V(H)$ together with a set $\E(H)$ of edges, each of which is a nonempty subset of $V(H)$. We do not allow multiple edges. A \emph{clutter} is a hypergraph for which no edge is a subset of another. As such, a clutter is exactly a Sperner set system, but we use the term clutter when we wish to consider the object through a hypergraph-theoretic lens.

In this paper, a \emph{partial edge colouring} of a hypergraph is simply an assignment of colours to some subset of its edges with no further conditions imposed. Let $\gamma$ be a partial edge colouring of a hypergraph $H$ with colour set $C$. For each $z \in C$, the set $\gamma^{-1}(z)$ of edges of $H$ assigned colour $z$ is called a \emph{colour class} of $\gamma$. For each $z \in C$ and $x \in V(H)$, let the number of edges of $H$ that are assigned the colour $z$ by $\gamma$ and contain the vertex $x$ be denoted $\deg^{\gamma}_z(x)$.

Throughout this paper, we will make extensive use of the following result from \cite{Paper1}, which is a consequence of a more general and powerful result of Bryant \cite{Darryn}. It allows the construction of a Sperner partition system to be reduced to finding a partial edge colouring of a hypergraph with appropriate properties, which can greatly simplify the task.

\begin{lemma}[\cite{Paper1}]\label{l:colouring}
Let $n$ and $k$ be integers with $n \geq k \geq 1$, let $H$ be a clutter with $|V(H)|=n$, and let $\{X_1,\ldots,X_t\}$ be a partition of $V(H)$ such that any permutation of $X_w$ is an automorphism of $H$ for each $w \in \{1,\ldots,t\}$. Suppose there is a partial edge colouring $\gamma$ of $H$ with colour set $C$  such that, for each $z \in C$, $|\gamma^{-1}(z)|=k$ and $\sum_{x \in X_w}\deg^{\gamma}_z(x)=|X_w|$ for each $w \in \{1,\ldots,t\}$. Then there is an $(n,k)$-Sperner partition system with $|C|$ partitions such that the parts of the partitions form a subset of $\E(H)$.
\end{lemma}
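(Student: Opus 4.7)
The plan is to construct, for each colour class of $\gamma$, a partition of $V(H)$ into $k$ edges of $H$, exploiting the automorphism hypothesis to rearrange vertices within each block $X_w$. Fix a colour $z \in C$ and list $\gamma^{-1}(z) = \{e_1,\ldots,e_k\}$. For each $i \in \{1,\ldots,k\}$ and $w \in \{1,\ldots,t\}$, set $a_{i,w} = |e_i \cap X_w|$, so that the hypothesis $\sum_{x \in X_w}\deg^{\gamma}_z(x) = |X_w|$ is simply a rewriting of $\sum_{i=1}^{k} a_{i,w} = |X_w|$ for each $w$. Then partition each $X_w$ into subsets $F_{1,w},\ldots,F_{k,w}$ with $|F_{i,w}| = a_{i,w}$, and set $E_i = \bigcup_{w=1}^{t} F_{i,w}$. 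This yields a partition $P_z = \{E_1,\ldots,E_k\}$ of $V(H)$, with the $E_i$ nonempty because the $e_i$ are (a clutter has no empty edge).

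The core step is to verify that each $E_i$ belongs to $\E(H)$. Since $|E_i \cap X_w| = a_{i,w} = |e_i \cap X_w|$ for every $w$, there is a permutation $\pi_i$ of $V(H)$ that stabilises each $X_w$ setwise and maps $e_i$ onto $E_i$. By the automorphism hypothesis, $\pi_i$ is an automorphism of $H$, so $E_i = \pi_i(e_i) \in \E(H)$. Importantly, a different automorphism is used for each $i$: the edges $E_1,\ldots,E_k$ need not arise from a single permutation of $V(H)$, only their union must recover $V(H)$, which is why it suffices to match the block-trace sizes of $e_i$ rather than the trace sets themselves.

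Performing this construction for every $z \in C$ produces $|C|$ partitions of $V(H)$, each with $k$ nonempty parts drawn from $\E(H)$. Because $H$ is a clutter, no edge of $H$ is contained in another, so no part of any partition is a subset of a part of a different partition and the Sperner condition is automatic. The main conceptual obstacle is recognising that edges in a single colour class may be relabelled independently within each block; once this is granted, the combinatorics collapses to the trivial observation that any prescription of block sizes summing to $|X_w|$ can be realised by a partition of $X_w$.
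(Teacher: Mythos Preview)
Your argument has a genuine gap. You correctly show that, for each colour $z$, one can find a partition $P_z$ of $V(H)$ into edges of $H$ whose block-intersection profiles match those of the edges in $\gamma^{-1}(z)$. But you then claim the Sperner condition is automatic because $H$ is a clutter. The clutter hypothesis only rules out \emph{proper} containments between parts; it does not prevent the same edge of $H$ from occurring as a part of $P_z$ and also of $P_{z'}$ for some $z' \neq z$. If that happens the Sperner condition fails (a set is a subset of itself), and you may not even obtain $|C|$ distinct partitions. Since your construction handles the colours one at a time and chooses the subsets $F_{i,w}$ independently for each $z$, nothing in the argument excludes such collisions.

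This global distinctness is exactly the non-trivial content of the lemma, and it is why the paper does not prove the statement itself but quotes it from \cite{Paper1}, where it is obtained as a consequence of a Baranyai-type result of Bryant~\cite{Darryn}. The role of the hypothesis that $\gamma$ is a genuine partial edge colouring (each edge receives at most one colour) is to guarantee that, for every profile $(a_1,\ldots,a_t)$, the total number of coloured edges with that profile does not exceed the number $\prod_{w}\binom{|X_w|}{a_w}$ of edges of $H$ with that profile; a factorisation-type argument then produces partitions realising the required profile multisets while keeping all parts across all colours pairwise distinct. A colour-by-colour construction cannot supply this coordination, so the claim that ``the combinatorics collapses to a trivial observation'' is not justified.
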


\section{Proof of Theorem~\ref{t:main}}\label{S:smallc}

Our goal in this section is to prove Theorem~\ref{t:main}. We achieve this by first introducing a new construction for Sperner partition systems and then showing that the construction can produce systems with size asymptotic to $\MMS(n,k)$ in the regime where $c$ is bounded and $r$ is not too close to $k$.

We now introduce a simple lemma which will be useful in detailing our construction. It will eventually allow us to distribute the edges of a hypergraph evenly between colour classes when attempting to define a colouring satisfying the hypotheses of Lemma~\ref{l:colouring}.

\begin{lemma}\label{l:balancedmatrix}
Let $s_1$ and $s_2$ be positive integers and $x$ and $a \leq s_2$ be nonnegative integers. There exists an $s_1 \times s_2$ matrix $T$ such that each row of $T$ has $a$ occurrences of $x+1$ and $s_2-a$ occurrences of $x$, and the sums of any two columns of $T$ differ by at most $1$.
\end{lemma}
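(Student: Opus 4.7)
The plan is to give an explicit cyclic construction for $T$. For each row $i \in \{1, \ldots, s_1\}$, I will designate $a$ of the $s_2$ columns to receive the entry $x+1$ (with the remaining $s_2 - a$ receiving $x$), and I will choose the designated columns for the successive rows to form cyclically shifted intervals of length $a$ modulo $s_2$. Concretely, define $T_{ij} = x+1$ if $j \in B_i$ and $T_{ij} = x$ otherwise, where
\[
B_i = \bigl\{\, ((i-1)a + t - 1) \bmod s_2 \,+\, 1 \;:\; t \in \{1, \ldots, a\} \,\bigr\}.
\]
The row condition is immediate from the construction: since $a \leq s_2$, the $a$ values $(i-1)a+1, \ldots, ia$ reduce to $a$ distinct residues modulo $s_2$, so $|B_i| = a$, giving exactly $a$ entries equal to $x+1$ and $s_2 - a$ equal to $x$ in row $i$.

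For the column condition, I will count, for each column $j$, the quantity $h_j := |\{i : j \in B_i\}|$. By the definition of $B_i$, we have $j \in B_i$ precisely when $j \equiv v \pmod{s_2}$ for some $v \in \{(i-1)a+1, \ldots, ia\}$. Letting $v$ range over all of $\{1, \ldots, s_1 a\}$ (so that each pair $(i,t)$ is counted exactly once), I get
\[
h_j = \bigl|\{\, v \in \{1, \ldots, s_1 a\} \,:\, v \equiv j \pmod{s_2}\,\}\bigr|,
\]
which equals either $\lfloor s_1 a / s_2 \rfloor$ or $\lceil s_1 a / s_2 \rceil$ by a pigeonhole argument. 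The sum of column $j$ of $T$ is $h_j(x+1) + (s_1 - h_j)x = s_1 x + h_j$, so the column sums of $T$ differ by at most $1$.

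Finally I will note the degenerate cases: if $a = 0$ then every $B_i$ is empty and $T$ is the constant matrix $x$; if $a = s_2$ then every $B_i = \{1, \ldots, s_2\}$ and $T$ is the constant matrix $x+1$; both trivially satisfy both conditions, so the argument covers them as well. There is no genuine obstacle here; the only care needed is in the modular indexing, and the heart of the proof is the observation that cyclically sliding a length-$a$ window across $\{1, \ldots, s_2\}$ in steps of size $a$ visits each residue class as evenly as possible.
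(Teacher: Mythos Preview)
Your proof is correct and takes a genuinely different route from the paper's. The paper proceeds by induction on $s_1$: given an $s_1' \times s_2$ matrix with the desired properties, it appends a new row whose $a$ copies of $x+1$ are placed preferentially in the columns with the current minimum sum, and then checks that the column-sum gap remains at most one. You instead give an explicit cyclic construction, placing the $x+1$'s in row $i$ into a length-$a$ window shifted by $(i-1)a$ modulo $s_2$, and reduce the column-count question to the elementary fact that the residues modulo $s_2$ are distributed as evenly as possible among $\{1,\ldots,s_1 a\}$.

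Your approach has the advantage of being fully explicit and of yielding the exact column sums $s_1 x + \lfloor s_1 a/s_2\rfloor$ or $s_1 x + \lceil s_1 a/s_2\rceil$ with no further work; the paper's inductive argument is slightly less concrete but is more readily adaptable if, say, one wanted rows with varying numbers of $x+1$'s. Either argument is entirely adequate for the lemma's very modest role later in the paper.
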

\begin{proof}
We proceed by induction on $s_1$. The result is clearly true when $s_1=1$, so let $s'_1 \in \{1,\ldots,s_1-1\}$ and suppose there exists an $s'_1 \times s_2$ matrix $T'$ with the required properties. Let $Y$ be the set of columns of $T'$ whose sum is the minimum column sum in $T'$. Add to $T'$ a new row with $a$ occurrences of $x+1$ and $s_2-a$ occurrences of $x$, placed so that each column in $Y$ contains an occurrence of $x+1$ if $a \geq |Y|$ and so that each occurrence of $x+1$ is in a column in $Y$ if $a < |Y|$. It can be checked that the resulting matrix has the required properties.
\end{proof}

We now introduce the construction that will be used to prove Theorem~\ref{t:main}.

\begin{lemma}\label{l:mainConstruction}
Let $n$, $c$, $k$ and $r$ be integers such that $n = ck + r$, $c \geq 2$, $k \geq 3$ and $r \in \{1, 2, \ldots, k-1\}$. Suppose that $n=hm$ for positive integers $m$ and $h$ such that $m \equiv 0 \mod{c}$ and let
\[p_1 = \floor{\mfrac{m(h^c-c-1)}{c(k-r)}}, \quad p_2 = \floor{\mfrac{ m}{r}\floor{\mfrac{\binom{h}{c+1}}{\binom{m-1}{c-1}}}}, \quad p'_1 = \floor{\mfrac{mh^c}{c(k-r)}}, \quad p'_2 = \floor{\mfrac{ m\binom{h}{c+1}}{r\binom{m-1}{c-1}}}.\]
There exists an almost uniform $(n,k)$-Sperner partition system with $p\binom{m-1}{c-1}$ partitions if
\begin{itemize}
    \item[\textup{(a)}]
$p=\min\{p_1,p_2\}$; or
    \item[\textup{(b)}]
$p=\min\{p'_1,p'_2\}$ and $pr \equiv 0 \mod{m}$.
\end{itemize}
\end{lemma}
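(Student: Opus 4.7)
The plan is to apply Lemma~\ref{l:colouring} to a clutter $H$ built on the ground set together with the partition $\{X_1,\ldots,X_h\}$ into $h$ blocks of size $m$ supplied by the factorisation $n=hm$. I take $H$ to have two families of edges: ``small'' $c$-edges that are monochromatic (each contained in a single $X_w$) and ``big'' $(c+1)$-edges that are transversal (each with exactly one element in each of $c+1$ distinct blocks). Since a monochromatic $c$-subset has all its elements in one block while a transversal $(c+1)$-subset has at most one element per block, no small edge is a subset of any big edge, so $H$ is a clutter. Because these families are defined purely in terms of per-block profiles, any permutation of a single $X_w$ is an automorphism of $H$, as required by Lemma~\ref{l:colouring}.

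Because $c\mid m$, Baranyai's theorem decomposes each $\binom{X_w}{c}$ into $\binom{m-1}{c-1}$ parallel classes of $m/c$ pairwise disjoint $c$-subsets, and this is the source of the factor $\binom{m-1}{c-1}$ in the final count of partitions. I will build the colour set $C$ of size $p\binom{m-1}{c-1}$ as the union of $\binom{m-1}{c-1}$ stages of $p$ colours each, one stage per Baranyai index. For a colour $z$, letting $a_{w,z}$ be the number of small edges of colour $z$ sitting in $X_w$ and $b_{w,z}$ the number of big edges of colour $z$ that meet $X_w$, I target the local balance $c\,a_{w,z}+b_{w,z}=m$ block-by-block. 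Summing over $w$ and using $n=hm=ck+r$ then forces $\sum_w a_{w,z}=k-r$, $\sum_w b_{w,z}=r(c+1)$, and hence $|\gamma^{-1}(z)|=k$ automatically.

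To realise this assignment I will invoke Lemma~\ref{l:balancedmatrix} twice. The first application, made within each stage, produces a matrix that prescribes how many $c$-subsets from the chosen Baranyai parallel class of each $X_w$ are given to each of the $p$ colours in that stage; the near-equal column sums delivered by Lemma~\ref{l:balancedmatrix} are exactly what allows the local balance $c\,a_{w,z}+b_{w,z}=m$ to be met on every block without exhausting any one stage's supply. The second application distributes a structured pool of transversal $(c+1)$-subsets: for each $(c+1)$-subset of blocks one selects $m$ transversal $(c+1)$-subsets that form a perfect matching on those blocks, producing altogether $m\binom{h}{c+1}$ big edges which are then spread uniformly across the colours. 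The two supply bounds $p\leq p_1$ and $p\leq p_2$ arise by tracking, respectively, the number of small edges used (the numerator $m(h^c-c-1)$ measuring the effective supply) and the number of big edges used ($\binom{h}{c+1}/\binom{m-1}{c-1}$ measuring the per-stage share of the perfect-matching pool). In part~(b), the divisibility hypothesis $pr\equiv 0\pmod{m}$ removes a rounding loss from the big-edge distribution and allows the slightly larger $p'_1,p'_2$ to be used in place of $p_1,p_2$.

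The main obstacle is the simultaneous bookkeeping: every colour $z$ and every block $w$ must meet $c\,a_{w,z}+b_{w,z}=m$ with integer $a_{w,z},b_{w,z}$, while the global totals of small and big edges used stay within the supplies in $\E(H)$ and the stage-by-stage allocations from Lemma~\ref{l:balancedmatrix} all fit together. Once this partial edge colouring $\gamma$ is in place, with $|\gamma^{-1}(z)|=k$ and $\sum_{x\in X_w}\deg^{\gamma}_z(x)=m$ for all $w,z$, Lemma~\ref{l:colouring} delivers an $(n,k)$-Sperner partition system with $p\binom{m-1}{c-1}$ partitions whose parts lie in $\E(H)$, and hence are either monochromatic $c$-subsets or transversal $(c+1)$-subsets; in particular the resulting system is almost uniform.
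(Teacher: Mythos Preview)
Your construction is the ``dual'' of the one that actually proves the lemma, and the swap breaks the small-edge count. In the paper's argument the ground set is split into $m$ parts $X_1,\ldots,X_m$ each of size $h$; the $c$-edges in $\mathcal{A}$ are the \emph{transversal} sets (one vertex from each of $c$ parts) and the $(c+1)$-edges in $\mathcal{B}$ are the \emph{monochromatic} sets $\binom{X_w}{c+1}$. Baranyai is applied once, to $\binom{\{1,\ldots,m\}}{c}$, so that the $c$-subsets of \emph{indices} decompose into $\binom{m-1}{c-1}$ parallel classes $\{S_{\ell,1},\ldots,S_{\ell,m/c}\}$. For each cell $S_{\ell,i}$ the supply of transversal $c$-edges is exactly $h^{c}$, and this is where the numerators $m(h^{c}-c-1)$ and $mh^{c}$ in $p_1$ and $p'_1$ come from; likewise $\binom{h}{c+1}$ is the supply of $(c+1)$-edges inside a single part of size $h$. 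Lemma~\ref{l:balancedmatrix} is used once, to produce a $p\times\frac{m}{c}$ matrix $T$ whose entries govern how many monochromatic $(c+1)$-edges each colour draws from each cell.

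With your choice of $h$ parts of size $m$, monochromatic $c$-edges and transversal $(c+1)$-edges, the per-stage supply of small edges in a single part is only $m/c$, the size of one Baranyai parallel class of $\binom{X_w}{c}$. Summing your local balance $c\,a_{w,z}+b_{w,z}=m$ first over the $p$ colours of a stage and then over the $h$ parts forces $p(k-r)\le hm/c$, i.e.\ $p\le n/\bigl(c(k-r)\bigr)$. This is smaller than $p'_1=\bigl\lfloor mh^{c}/\bigl(c(k-r)\bigr)\bigr\rfloor$ by a factor of roughly $h^{c-1}$, so your scheme cannot reach the values of $p$ stated in the lemma. (That your big-edge pool $m\binom{h}{c+1}$ happens to match $r\binom{m-1}{c-1}p'_2$ is a coincidence: $\binom{h}{c+1}$ reappears in your setup as the number of $(c+1)$-tuples of parts, not as the number of $(c+1)$-subsets of one part; it does not rescue the small-edge side.) Reversing the roles---$m$ parts of size $h$, transversal $c$-edges, monochromatic $(c+1)$-edges, Baranyai on the index set $\{1,\ldots,m\}$---gives the correct proof.
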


\begin{proof}
Suppose the hypotheses of (a) hold or that those of (b) do. First note that $r \equiv 0 \mod{c}$ since $n \equiv 0 \mod{c}$ and $ck \equiv 0 \mod{c}$. We will construct our Sperner partition system on a ground set $X = X_1 \cup \cdots \cup X_m$, where $X_1,\ldots,X_m$ are pairwise disjoint sets such that $|X_1| =  \cdots = |X_m| = h$. Let $\mathcal{M}=\binom{\{1,\ldots,m\}}{c}$ and let $J=\{1,\ldots,\binom{m-1}{c-1}\} \times \{1,\ldots,\frac{m}{c}\}$. By Baranyai's theorem \cite{baranyaifactorization}, we can index the sets in $\mathcal{M}$ so that $\mathcal{M}=\{S_{\ell,i}:(\ell,i) \in J\}$ and $\{S_{\ell,i}: i \in \{1,\ldots,\frac{m}{c}\}\}$ is a partition of $\{1,\ldots,m\}$ for each $\ell \in \{1,\ldots,\binom{m-1}{c-1}\}$. Let $H$ be a hypergraph with vertex set $X$ and edge set $\A \cup \B$ where
\begin{align*}
\A&=\medop\bigcup_{(\ell,i) \in J} \mathcal{A}_{\ell,i} &&\mbox{for}& \mathcal{A}_{\ell,i} &=\left\{E \in \tbinom{X}{c}: \mbox{$|E \cap X_w|=1$ for each $w \in S_{\ell,i}$} \right\}, \\
\B&=\medop\bigcup_{w=1}^{m} \mathcal{B}_w  &&\mbox{for}& \mathcal{B}_w &= \tbinom{X_w}{c+1}.
\end{align*}
The indexing of the sets in $\mathcal{M}$ will act as a guide for a partial edge colouring of $H$. Let $C'$ be a set with $|C'|=p$ and let $C=C' \times  \{1,2,\ldots,\binom{m-1}{c-1}\}$ be a set of colours. It is clear that any permutation of $X_w$ is an automorphism of $H$ for each $w \in \{1,\ldots,m\}$. Thus, by Lemma~\ref{l:colouring}, to find an $(n,k)$-Sperner partition system with $p\binom{m-1}{c-1}$ partitions, it suffices to find a partial edge colouring $\gamma$ of $H$ with colour set $C$ such that for each $z \in C, |\gamma^{-1}(z)| = k$ and $\sum_{x \in X_w} \deg_z^\gamma(x) = h$ for each $w\in \{1, \ldots, m\}$. We proceed to show that such a partial edge colouring $\gamma$ exists.

Let $x = \floor{\frac{r}{m}}$ and $a = \frac{1}{c}(r-mx)$, noting that $a$ will be a non-negative integer and let $T=(t_{z,i})$ be a $p\times \frac{m}{c}$ matrix with $a$ occurrences of $x+1$ and $\frac{m}{c}-a$ occurrences of $x$ in each row such that the sums of any two columns differ by at most $1$. Such a matrix exists by Lemma~\ref{l:balancedmatrix}. We consider the rows of $T$ to be indexed by the elements of $C'$. It follows from our definition of $a$ that the sum of any row of $T$ is $\frac{r}{c}$. Thus the sum of all the entries in $T$ is $\frac{pr}{c}$ and hence, because $T$ has $\frac{m}{c}$ columns and the sums of any two of these differ by at most 1, the sum of any column of $T$ is in $\{\lfloor\frac{pr}{m}\rfloor,\lceil\frac{pr}{m}\rceil\}$. So we have
\begin{equation}\label{e:matrixLineSums}
\medop\sum_{i=1}^{m/c}t_{z,i}=\tfrac{r}{c} \mbox{ for each $z \in C'$},\quad \mbox{and} \quad \medop\sum_{z \in C'}t_{z,i} \in \left\{\left\lfloor\tfrac{pr}{m}\right\rfloor,\left\lceil\tfrac{pr}{m}\right\rceil\right\} \mbox{ for each $i \in \{1,\ldots,\frac{m}{c}\}$}.
\end{equation}

We create our partial edge colouring $\gamma$ of $H$ by, for all $(z,\ell) \in C$ and $i \in \{1,\ldots,\frac{m}{c}\}$, one at a time in arbitrary order, performing the following process.
\begin{itemize}
    \item
Assign the colour $(z,\ell)$ to $h-(c+1)t_{z,i}$ previously uncoloured edges in $\mathcal{A}_{\ell,i}$.
    \item
For each $w \in S_{\ell,i}$, assign the colour $(z,\ell)$ to $t_{z,i}$ previously uncoloured edges in $\mathcal{B}_w$.
\end{itemize}
After performing this process for all $(z,\ell) \in C$ and $i \in \{1,\ldots,\frac{m}{c}\}$, we call the resulting colouring $\gamma$. We will show that there are always uncoloured edges available throughout this process and that $\gamma$ satisfies the conditions we require of it.
\begin{itemize}
    \item[\textup{(i)}]
Let $(\ell,i) \in J$. We show that the number of edges in $\A_{\ell,i}$ assigned colours is at most $|\A_{\ell,i}|=h^c$. These edges only receive colours in $C' \times \{\ell\}$ and, for each $z \in C'$, the number that receive colour $(z,\ell)$ is $h-(c+1)t_{z,i}$. So the total number that are assigned a colour is $\sum_{z \in C'} (h-(c+1)t_{z,j})$, which is at most $ph - (c+1)\lfloor\frac{pr}{m}\rfloor$ by \eqref{e:matrixLineSums}.
If the hypotheses of (a) hold then
\[ph - (c+1)\left\lfloor\mfrac{pr}{m}\right\rfloor < p h - (c+1)\left(\mfrac{pr}{m}-1\right) = \mfrac{p c(k-r)}{m}+c+1 \leq h^c\]
where the equality follows by substituting $h=\frac{1}{m}(ck+r)$ and the last inequality is obtained using $p \leq p_1$ and the definition of $p_1$. If the hypotheses of (b) hold then similarly we have
\[ph - (c+1)\left\lfloor\mfrac{pr}{m}\right\rfloor= ph - (c+1)\mfrac{pr}{m} = \mfrac{p c(k-r)}{m} \leq h^c
\]
where the last inequality is obtained using $p \leq p'_1$ and the definition of $p'_1$.
    \item[\textup{(ii)}]
Let $w \in \{1,\ldots,m\}$. We show that we do not run out of uncoloured edges in $\mathcal{B}_w$ by showing that, for each $\ell \in \{1,\ldots,\binom{m-1}{c-1}\}$, the number of edges in $\mathcal{B}_w$ assigned a colour in $C' \times \{\ell\}$ is at most $|\mathcal{B}_w|/\binom{m-1}{c-1}=\binom{h}{c+1}/\binom{m-1}{c-1}$. Let $\ell \in \{1,\ldots,\binom{m-1}{c-1}\}$ and let $i$ be the unique element of $\{1,\ldots,\frac{m}{c}\}$ such that $w \in S_{\ell,i}$. Then the number of edges in $\mathcal{B}_w$ assigned a colour in $C' \times \{\ell\}$ is $\sum_{z \in C'}t_{z,i}$, and this is at most $\lceil\frac{pr}{m}\rceil$ by \eqref{e:matrixLineSums}. If the hypotheses of (a) hold then, $\lceil\frac{pr}{m}\rceil \leq  \lceil\frac{p_2r}{m}\rceil$ and we obtain the required bound using the definition of $p_2$. If the hypotheses of (b) hold then, $\lceil\frac{pr}{m}\rceil=\frac{pr}{m} \leq  \frac{p'_2r}{m}$ and we can obtain the required bound using the definition of $p'_2$.
    \item[\textup{(iii)}]
Let $(z,\ell)$ be a colour in $C$. We show that $|\gamma^{-1}((z,\ell))|=k$. For each $i \in \{1,\ldots,\frac{m}{c}\}$, we assign $(z,\ell)$ to $h-(c+1)t_{z,i}$ edges in $\A$ and, because $|S_{\ell,i}|=c$, to $ct_{z,i}$ edges in $\B$. So
\[\left|\gamma^{-1}\bigl((z,\ell)\bigr)\right| = \medop\sum_{i=1}^{m/c}(h-(c+1)t_{z,i}+ct_{z,i}) = \mfrac{hm}{c}-\medop\sum_{i=1}^{m/c}t_{z,i} = \mfrac{hm}{c}-\mfrac{r}{c}=k\]
where the third equality follows by \eqref{e:matrixLineSums} and the last because $hm=ck+r$.
    \item[\textup{(iv)}]
Let $w \in \{1,\ldots,m\}$ and $(z,\ell) \in C$. We show that $\sum_{x \in X_{w}}\deg_{(z,\ell)}^\gamma(x)=h$. Let $i$ be the unique element of $\{1,\ldots,\frac{m}{c}\}$ such that $w \in S_{\ell,i}$. Then $(z,\ell)$ is assigned to $h-(c+1)t_{z,i}$ edges in $\A_{\ell,i}$, each of which contains one vertex in $X_w$, and to $t_{z,i}$ edges in $\B_{w}$, each of which contains $c+1$ vertices in $X_w$. Any other edges assigned $(z,\ell)$ are disjoint from $X_w$. Thus
\[\medop\sum_{x \in X_{w}}\deg_{(z,\ell)}^\gamma(x)=(h-(c+1)t_{z,j})+(c+1)t_{z,j}=h.\]
\end{itemize}
So by (i) and (ii) we can indeed obtain the partial edge colouring $\gamma$ as we claimed and by (iii) and (iv) $\gamma$ has the required properties. So we can apply Lemma~\ref{l:colouring} to obtain an almost uniform $(n,k)$-Sperner partition system with $p\binom{m-1}{c-1}$ partitions as discussed.
\end{proof}

Note that we could potentially include all $c$-subsets of $X$ that are not subsets of an $X_i$ as edges of our clutter above. However, attempting to use all of these would make finding a suitable colouring $\gamma$ more difficult. Moreover, as $m$ becomes large, the number of $c$-sets we do not use is an asymptotically insignificant fraction of the number of those that we do. With this new construction for Sperner partition systems in hand, we are now able to prove Theorem~\ref{t:main}.


\begin{proof}[\textup{\textbf{Proof of Theorem~\ref{t:main}.}}]
Observe that we have $n=\Theta(k)=\Theta(k-r)$ and hence $c=O(1)$. We consider two cases according to the value of $r$.

\textbf{Case 1.} Suppose that $r \leq k^{(2c-1)/2c}$. So $r=o(k)$ and $n \sim ck$. Clearly $\SP(ck,k) \leq \SP(n,k) \leq \MMS(n,k)$ using \eqref{e:mono}, so to complete the proof it suffices to show that $\MMS(n,k) \sim \SP(ck,k)$. Note that
\[\MMS(n,k) = \mfrac{\binom{n}{c}}{k-r + \frac{r(c+1)}{n-c}} \sim \mfrac{1}{k}\mbinom{n}{c} \sim \mfrac{n^c}{c!\,k} \sim \mfrac{n^{c-1}}{(c-1)!} \sim \mfrac{(ck)^{c-1}}{(c-1)!} \sim \mbinom{ck}{c-1} = \SP(ck,k)\]
where the first $\sim$ follows because $r=o(k)$, we use $n \sim ck$ frequently throughout and the final equality comes from \cite[Theorem 1]{MeaMouSte} as discussed in the introduction.


\textbf{Case 2.} Suppose that $r > k^{(2c-1)/2c}$. Let
\[h = \left\lceil\left(\mfrac{(c+1)rn^{c-1}}{k-r}\right)^{1/c}\right\rceil \qquad\text{and} \qquad m = \floor{\mfrac{n}{h}}-\delta,\]
where $\delta \in \{0,1, \ldots, c-1\}$ is chosen such that $m \equiv 0 \mod c$. Since $k^{(2c-1)/2c}<r<k$ and $k-r=\Theta(k)=\Theta(n)$, we have $h=O(n^{(c-1)/c})$ but $h=\Omega(n^{(2c^2-2c-1)/2c^2})$. So $mh \leq n$ and $mh = n-O(n^{(c-1)/c})=ck+r-o(r)$. Let $q= mh-ck$ and note that $q \leq r$ and $q = r - o(r)$.

Using $mh \leq n$ and \eqref{e:mono}, we have $\SP(mh,k) \leq \SP(n,k) \leq \MMS(n,k)$. We will complete the proof by showing that $\SP(mh,k) \geq \MMS(n,k)(1+o(1))$. We will use Lemma~\ref{l:mainConstruction}(a) to obtain this lower bound on $\SP(mh,k)$. Let $p_1$ and $p_2$ be as defined in the Lemma~\ref{l:mainConstruction} statement, except with $q$ in place of $r$ (noting that $mh=ck+q$). Now,
\begin{equation}\label{e:alphamms1}
p_2  \sim \mfrac{m\binom{h}{c+1}}{q\binom{m-1}{c-1}} \sim \mfrac{ mh^{c+1}}{qc(c+1)m^{c-1}} \sim \mfrac{ mh^{2c}}{qc(c+1)n^{c-1}} \sim \mfrac{mh^c}{c(k-r)} \sim \mfrac{mh^c}{c(k-q)} \sim p_1
\end{equation}
where the first $\sim$ holds because $\binom{m-1}{c-1}=o(\binom{h}{c+1})$ since $h=\Omega(n^{(2c^2-2c-1)/2c^2})$, the third holds because $m \sim \frac{n}{h}$, the fourth holds by applying the definition of $h$ and then using $q \sim r$, and the fifth holds using $k-r \sim k-q$. By Lemma~\ref{l:mainConstruction}(a) and \eqref{e:alphamms1}, we have
\[\SP(mh,k) \geq \mbinom{m-1}{c-1}\min\{p_1,p_2\} \sim \mbinom{m-1}{c-1}\mfrac{ mh^c}{c(k-r)} \sim \mfrac{ m^ch^c}{c!(k-r)} \sim \mfrac{ n^c}{c!(k-r)} \sim \MMS(n,k)\]
where the first $\sim$ uses \eqref{e:alphamms1}, the third uses $mh \sim n$ and the last uses the definition of $\MMS(n,k)$ together with $k-r=\Theta(k)$ and $\frac{r(c+1)}{n-c}=O(1)$.
\end{proof}

\section{The case \texorpdfstring{$\boldsymbol{c=2}$}{c=2}}\label{S:c=2}

Recall from the introduction that $\SP(n,k)=1$ when $c=1$. Thus, of the cases where $c$ is constant, the first nontrivial case of $c=2$ is of particular interest. Here we first observe two consequences of an upper bound on $\SP(n,k)$ given in \cite{Paper1} which slightly improves on $\MMS(n,k)$. We extend the usual binomial coefficient notation by defining $\binom{q}{t}=\frac{1}{t!}\prod_{i=0}^{t-1}(q-i)$, for any real number $q$ and integer $t$ with $q \geq t \geq 0$.

\begin{theorem}[\cite{Paper1}]\label{t:upperBound}
If $n$ and $k$ are integers such that $n \geq 2k+2$ and $k \geq 4$, then
\begin{equation}\label{e:upperBound}
\bigl\lceil \bigl(1-\tfrac{r(c+1)}{n}\bigr) \cdot \SP(n,k) \bigr\rceil+\L_c\left(\bigl\lfloor \tfrac{r(c+1)}{n}\cdot\SP(n,k) \bigr\rfloor\right) \leq \mbinom{n-1}{c-1},
\end{equation}
where $c$ and $r$ are the integers such that $n=ck+r$ and $r \in \{0,\ldots,k-1\}$ and $\L_c(x) = \binom{q}{c-1}$ with $q$ being the unique nonnegative real number for which $q \geq c$ and $x = \binom{q}{c}$.
\end{theorem}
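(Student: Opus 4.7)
The plan is to prove Theorem~\ref{t:upperBound} by applying the Kruskal--Katona theorem to the \emph{link} of each vertex in an extremal Sperner partition system, and then extracting the global inequality by a careful choice of vertex. Throughout let $\mathcal{S}$ be an almost uniform $(n,k)$-Sperner partition system attaining $N:=\SP(n,k)$ partitions, and for each $v\in\{1,\ldots,n\}$ write $N_c(v)$ and $N_{c+1}(v)$ for the number of partitions in which $v$ lies in a part of size $c$ or $c+1$, respectively, so that $N_c(v)+N_{c+1}(v)=N$. A standard double count of incidences between vertices and $(c+1)$-parts yields $\sum_v N_{c+1}(v)=r(c+1)N$, so the average value of $N_{c+1}(v)$ over $v$ equals $\beta N$, where $\beta:=\frac{r(c+1)}{n}$.

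My first step is to establish a per-vertex inequality via a link argument. For each $v$, the link
\[\mathcal{L}_v=\{S\setminus\{v\}:v\in S\text{ and $S$ is a part of some partition in }\mathcal{S}\}\]
is a Sperner family on $\{1,\ldots,n\}\setminus\{v\}$ consisting of $N_c(v)$ sets of size $c-1$ and $N_{c+1}(v)$ sets of size $c$. By the Sperner condition its $(c-1)$-shadow is disjoint from its $(c-1)$-sets, and by Kruskal--Katona this shadow has size at least $\L_c(N_{c+1}(v))$. Hence
\begin{equation}\label{e:perVertex}
N_c(v)+\L_c(N_{c+1}(v))\leq \mbinom{n-1}{c-1}\qquad\text{for every }v\in\{1,\ldots,n\}.
\end{equation}

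The second step is to pass from \eqref{e:perVertex} to the desired global statement. Setting $f(y)=(N-y)+\L_c(y)$, inequality \eqref{e:perVertex} reads $f(N_{c+1}(v))\leq \binom{n-1}{c-1}$ for every $v$, while the identity $\lceil(1-\beta)N\rceil=N-\lfloor\beta N\rfloor$ turns the theorem's conclusion into $f(\lfloor\beta N\rfloor)\leq\binom{n-1}{c-1}$. The Kruskal--Katona function $\L_c$ is strictly concave and satisfies $\L_c'(y)<1$ once $y$ exceeds a small threshold $y^{\ast}$ depending only on $c$, so $f$ is concave with its unique maximum at $y^{\ast}$ and is strictly decreasing on $[y^{\ast},\infty)$. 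Since the average of $N_{c+1}(v)$ is $\beta N$, some vertex $v$ has $N_{c+1}(v)\leq\lfloor\beta N\rfloor$, and provided this $v$ also satisfies $N_{c+1}(v)\geq y^{\ast}$, the monotonicity of $f$ on $[y^{\ast},\infty)$ gives $f(\lfloor\beta N\rfloor)\leq f(N_{c+1}(v))\leq\binom{n-1}{c-1}$, as required.

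The main obstacle will be the degenerate configurations in which every vertex $v$ with $N_{c+1}(v)\leq\lfloor\beta N\rfloor$ in fact satisfies $N_{c+1}(v)<y^{\ast}$, so that the monotonicity step above fails. Such situations force some $v$ to have $N_{c+1}(v)=0$, and \eqref{e:perVertex} then yields only the coarse estimate $N\leq\binom{n-1}{c-1}$. Completing the proof here appears to require a separate argument invoking the hypotheses $n\geq 2k+2$ and $k\geq 4$, either to rule out such imbalanced extremal structures or to show that $\lfloor\beta N\rfloor$ and $\L_c(\lfloor\beta N\rfloor)$ are then small enough to be absorbed directly into the gap between $N$ and $\binom{n-1}{c-1}$. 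The remaining corner cases where $\lfloor\beta N\rfloor$ itself lies below $y^{\ast}$ can be handled by direct substitution into \eqref{e:perVertex} after choosing a vertex with $N_{c+1}(v)=\lfloor\beta N\rfloor$ or $N_{c+1}(v)=0$.
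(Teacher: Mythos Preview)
This theorem is not proved in the present paper at all: it is quoted verbatim from \cite{Paper1} (note the citation in the theorem header), and the paper only \emph{uses} it. So there is no ``paper's own proof'' to compare against here; any comparison would have to be with the argument in \cite{Paper1}.

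That said, your proposal has two genuine gaps that prevent it from being a proof even in isolation.

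\textbf{Unjustified almost-uniformity.} You begin by letting $\mathcal{S}$ be an \emph{almost uniform} $(n,k)$-Sperner partition system attaining $\SP(n,k)$ partitions. The existence of such a system is precisely the conjecture of Meagher, Moura and Stevens mentioned in Section~\ref{S:prelim}; it is not known. Without almost uniformity the link $\mathcal{L}_v$ contains sets of many different sizes, the clean decomposition into $N_c(v)$ sets of size $c-1$ and $N_{c+1}(v)$ sets of size $c$ collapses, and the Kruskal--Katona step as you have written it no longer applies. The bound in Theorem~\ref{t:upperBound} is asserted for arbitrary Sperner partition systems, so this assumption cannot simply be made.

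\textbf{The averaging step is incomplete by your own admission.} Even granting almost uniformity, your passage from the per-vertex inequality \eqref{e:perVertex} to the global statement is not finished. You correctly observe that $f(y)=(N-y)+\L_c(y)$ is eventually decreasing, and that some $v$ has $N_{c+1}(v)\leq\lfloor\beta N\rfloor$; but you then need that particular $v$ to lie in the decreasing range of $f$, and you explicitly flag the configurations where this fails as a ``main obstacle'' requiring ``a separate argument''. No such argument is supplied. Note also that averaging \eqref{e:perVertex} over $v$ and applying Jensen to the concave function $\L_c$ goes the wrong way, so the obvious fix does not work either. As written, the proposal is a sketch of a plausible strategy rather than a proof.
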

Note that this acts as an upper bound due to the fact that for fixed nonnegative integers $n$ and $k$, the left hand side of \eqref{e:upperBound} is nondecreasing in $\SP(n,k)$ (see \cite{Paper1} for details). Computation reveals that, in the case $c=2$, there are numerous small parameter sets $(n,k)$ for which we can exactly determine  $\SP(n,k)$ because the upper bound given by Theorem~\ref{t:upperBound} equals the lower bound implied by Lemma~\ref{l:mainConstruction}(b) for some choice of $m$ and $h$ with $mh=n$. In Table~\ref{table:exactValues} (see page~\pageref{table:exactValues}), we list all the parameter sets with $n \leq 1000$ for which this occurs, together with the associated values of $m$, $h$ and $\SP(n,k)$. We have not found any such parameter sets for $c \geq 3$. This is perhaps not surprising because, as discussed immediately after Lemma~\ref{l:mainConstruction}, the construction ``wastes'' some $c$-sets when $c \geq 3$.

\begin{table}[h]
\begin{small}
\begin{center}
\begin{tabular}{rr||rr||r ll rr||rr||r}
$n$ & $k$ & $m$ & $h$ & $\SP(n,k)$ & \rule{1cm}{0cm} & \rule{1cm}{0cm} & $n$ & $k$ & $m$ & $h$ & $\SP(n,k)$ \\ \cline{1-5} \cline{8-12}
36  & 15  & 4   & 9   & 54         &  &  & 560 & 203 & 8   & 70  & 2800       \\
44  & 18  & 4   & 11  & 72         &  &  & 560 & 232 & 16  & 35  & 1080       \\
56  & 22  & 4   & 14  & 117        &  &  & 564 & 220 & 12  & 47  & 1518       \\
88  & 33  & 4   & 22  & 264        &  &  & 576 & 224 & 12  & 48  & 1584       \\
128 & 54  & 8   & 16  & 210        &  &  & 588 & 228 & 12  & 49  & 1650       \\
138 & 54  & 6   & 23  & 330        &  &  & 600 & 224 & 10  & 60  & 2250       \\
144 & 56  & 6   & 24  & 360        &  &  & 600 & 260 & 20  & 30  & 950        \\
144 & 60  & 8   & 18  & 252        &  &  & 624 & 304 & 52  & 12  & 663        \\
150 & 58  & 6   & 25  & 390        &  &  & 640 & 230 & 8   & 80  & 3584       \\
150 & 65  & 10  & 15  & 225        &  &  & 672 & 266 & 14  & 48  & 1664       \\
160 & 66  & 8   & 20  & 294        &  &  & 672 & 273 & 16  & 42  & 1440       \\
168 & 77  & 14  & 12  & 208        &  &  & 680 & 323 & 40  & 17  & 780        \\
230 & 95  & 10  & 23  & 432        &  &  & 700 & 275 & 14  & 50  & 1820       \\
252 & 111 & 14  & 18  & 364        &  &  & 720 & 290 & 16  & 45  & 1620       \\
288 & 105 & 6   & 48  & 1280       &  &  & 720 & 330 & 30  & 24  & 928        \\
288 & 128 & 16  & 18  & 405        &  &  & 750 & 275 & 10  & 75  & 3375       \\
300 & 120 & 10  & 30  & 675        &  &  & 756 & 360 & 42  & 18  & 861        \\
306 & 111 & 6   & 51  & 1445       &  &  & 768 & 352 & 32  & 24  & 992        \\
318 & 115 & 6   & 53  & 1560       &  &  & 770 & 282 & 10  & 77  & 3510       \\
324 & 117 & 6   & 54  & 1620       &  &  & 800 & 335 & 20  & 40  & 1482       \\
330 & 119 & 6   & 55  & 1680       &  &  & 812 & 315 & 14  & 58  & 2301       \\
336 & 144 & 14  & 24  & 546        &  &  & 816 & 289 & 8   & 102 & 5712       \\
336 & 160 & 28  & 12  & 378        &  &  & 840 & 315 & 12  & 70  & 3080       \\
342 & 123 & 6   & 57  & 1805       &  &  & 840 & 350 & 20  & 42  & 1596       \\
360 & 129 & 6   & 60  & 2000       &  &  & 840 & 378 & 30  & 28  & 1160       \\
360 & 135 & 8   & 45  & 1260       &  &  & 852 & 319 & 12  & 71  & 3168       \\
368 & 138 & 8   & 46  & 1288       &  &  & 864 & 342 & 16  & 54  & 2160       \\
378 & 135 & 6   & 63  & 2205       &  &  & 880 & 365 & 20  & 44  & 1710       \\
420 & 175 & 14  & 30  & 780        &  &  & 936 & 348 & 12  & 78  & 3718       \\
480 & 176 & 8   & 60  & 2100       &  &  & 938 & 358 & 14  & 67  & 3003       \\
528 & 192 & 8   & 66  & 2541       &  &  & 944 & 332 & 8   & 118 & 7497       \\
528 & 220 & 16  & 33  & 990        &  &  & 960 & 448 & 40  & 24  & 1170       \\
546 & 221 & 14  & 39  & 1183       &  &  & 994 & 378 & 14  & 71  & 3276
\end{tabular}
\end{center}
\end{small}
\caption{Parameter sets $(n,k)$ for which $\SP(n,k)$ is exactly determined by Theorem~\ref{t:upperBound} and Lemma~\ref{l:mainConstruction}(b), and the associated values of $m$, $h$ and $\SP(n,k)$.}\label{table:exactValues}
\end{table}

In the special case where $c = 2$ and $r$ is small compared to $n$, we are able to give a more explicit form of the bound implied by Theorem~\ref{t:upperBound}.

\begin{lemma}\label{L:smallRUpperBound}
Let $k \geq 4$ and $r$ be integers such that $1 \leq r \leq \frac{1}{3}\sqrt{2k}$ and let $t = \ceil{\L_2(3r)}$. Then $\SP(2k+r,k) \leq 2k+4r-t-1$.
\end{lemma}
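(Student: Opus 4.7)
The plan is to derive Lemma~\ref{L:smallRUpperBound} as a direct consequence of Theorem~\ref{t:upperBound} applied with $c = 2$ and $n = 2k+r$. Writing $S = \SP(2k+r,k)$, the inequality \eqref{e:upperBound} becomes
\[\bigl\lceil \bigl(1-\tfrac{3r}{2k+r}\bigr)S \bigr\rceil + \L_2\bigl(\bigl\lfloor \tfrac{3r}{2k+r}S \bigr\rfloor\bigr) \leq 2k+r-1.\]
Since the left-hand side is nondecreasing in $S$, it suffices to show this inequality fails when $S$ is replaced by $S_0 = 2k+4r-t$; this will force $\SP(2k+r,k) \leq S_0 - 1 = 2k+4r-t-1$, as required.

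First I would verify the identity $\tfrac{3r}{2k+r}S_0 = 3r + \tfrac{3r(3r-t)}{2k+r}$ and carefully bound the error term. The hypothesis $r \leq \tfrac{1}{3}\sqrt{2k}$ gives $9r^2 \leq 2k$, while a short calculation shows that $\L_2(3r) = \tfrac{1}{2}(1+\sqrt{1+24r}) \leq 3r$ for all integers $r \geq 1$, hence $t = \lceil \L_2(3r)\rceil \leq 3r$. Together these yield $0 \leq \tfrac{3r(3r-t)}{2k+r} \leq \tfrac{9r^2}{2k+r} < 1$. Consequently $\lfloor \tfrac{3r}{2k+r}S_0 \rfloor = 3r$, and mirroring the computation $(1-\tfrac{3r}{2k+r})S_0 = (2k+r-t) - \tfrac{3r(3r-t)}{2k+r} \in (2k+r-t-1,\, 2k+r-t]$ gives $\lceil (1-\tfrac{3r}{2k+r})S_0 \rceil = 2k+r-t$.

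Substituting these evaluations, the left-hand side at $S_0$ equals $(2k+r-t) + \L_2(3r)$. To complete the argument I would invoke the defining relation $t = \lceil \L_2(3r)\rceil$, which forces $\L_2(3r) > t - 1$ in every case: if $\L_2(3r)$ is an integer then $t = \L_2(3r)$, and otherwise $t - 1 < \L_2(3r) < t$. Thus the left-hand side at $S_0$ strictly exceeds $2k+r-1$, contradicting Theorem~\ref{t:upperBound} unless $S \leq S_0 - 1$.

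The main technical delicacy is pinning down the floor and ceiling values exactly; both hinge on trapping the error term $\tfrac{3r(3r-t)}{2k+r}$ in $[0,1)$, which is precisely the role of the hypothesis $r \leq \tfrac{1}{3}\sqrt{2k}$. (A minor concern is that Theorem~\ref{t:upperBound} as stated requires $n \geq 2k+2$, so the case $r=1$ may warrant a separate appeal or a direct extension of the upper bound; for all $r \geq 2$ the argument above applies without modification.) Once the floor/ceiling reductions are in hand, the remaining work amounts to elementary manipulation using the definition of $\L_2$.
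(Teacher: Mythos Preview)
Your proof is correct and follows essentially the same route as the paper's: assume $\SP(2k+r,k) \geq 2k+4r-t$, apply Theorem~\ref{t:upperBound} with $c=2$, reduce the floor and ceiling using $\tfrac{3r}{2k+r}(2k+4r-t)=3r+\tfrac{3r(3r-t)}{2k+r}\in[3r,3r+1)$, and obtain the contradiction $\L_2(3r)\leq t-1$. Your observation about the boundary case $r=1$ (where $n=2k+1<2k+2$) is well taken; the paper's own proof does not address it explicitly either, but the bound $\SP(2k+1,k)\leq 2k$ is exactly the Li--Meagher result recalled just after the lemma, so the case is covered.
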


\begin{proof}
Suppose for a contradiction that $\SP(n,k) \geq  2k+4r-t$. Then, because the left side of \eqref{e:upperBound} is monotonically increasing in $\SP(n,k)$, Theorem~\ref{t:upperBound} implies that
\begin{equation}\label{E:upperBound}
\ceil{\left(1- \tfrac{3r}{2k + r}\right)\left(2k +4r - t\right)} + \L_2\left(\floor{\tfrac{3r}{2k + r}\left(2k +4r - t\right)}\right) \leq 2k + r-1.
\end{equation}
Observe that $\tfloor{\tfrac{3r}{2k + r}(2k + 4r - t)}=\tfloor{3r + \frac{3r(3r-t)}{2k+r}}=3r$ because $9r^2 \leq 2k$. Using this fact, we have $\tceil{(1- \tfrac{3r}{2k + r})(2k + 4r - t)} =2k + 4r - t - \tfloor{\tfrac{3r}{2k + r}(2k + 4r - t)} = 2k+r-t$. So \eqref{E:upperBound} is equivalent to $2k + r  - t + \L_2(3r) \leq 2k + r - 1$ which is impossible, because  $t-\L_2(3r) < 1$ by the definition of $t$.
\end{proof}
More routine calculations establish that Theorem~\ref{t:upperBound} does not rule out the possibility that $\SP(2k+r,k) = 2k + 4r - 1 - t$.

In \cite{LiMeagher}, it is shown that $2k-1 \leq \SP(2k+1,k) \leq 2k$ and $2k+1 \leq \SP(2k+2,k) \leq 2k+3$. As a consequence of \eqref{e:mono}, the latter means that $2k+1$ is a lower bound for $\SP(2k+r,k)$ for all $r \geq 3$. For small values of $r \geq 3$ we give the upper bound provided by Lemma~\ref{L:smallRUpperBound} on $\SP(2k+r,k)$, together with the range of $k$ values it applies for in Table~\ref{table:upperBounds} below. Lemma~\ref{L:smallRUpperBound} guarantees that this bound will hold for $k \geq \frac{9}{2}r^2$, but in Table~\ref{table:upperBounds}  we give a more precise lower bound on $k$, obtained computationally by searching for which values of $k < \frac{9}{2}r^2$ Theorem~\ref{t:upperBound} guarantees the desired bound on $\SP(2k+r,k)$.

\begin{table}[H]
\begin{center}
\begin{small}
\begin{tabular}{r||c|c|c|c|c|c|c|c}
  $r$ & 3 & 4 & 5 & 6 & 7 & 8 & 9 & 10 \\   \hline
  for $k \geq$ & 17 & 35 & 32 & 97 & 71 & 189 & 253 & 311 \\
  $\SP(2k+r,k) \leq$ & $2k+6$ & $2k+9$ & $2k+13$ & $2k+16$ & $2k+20$ & $2k+23$ & $2k+27$ & $2k+30$
\end{tabular}
\end{small}
\caption{Upper bounds on  $\SP(2k+r,k)$ and the values of $k$ for which they hold.}\label{table:upperBounds}
\end{center}
\end{table}

Finally in this section we prove Theorem~\ref{T:3ktake2case} by showing that, when $c=2$ and $r = k -2$, Lemma~\ref{l:mainConstruction}(b) and Theorem~\ref{t:upperBound} allow us to narrow $\SP(n,k)$ down to one of two possible values.

\begin{proof}[\textbf{\textup{Proof of Theorem~\ref{T:3ktake2case}}}]
Let $\ell \geq 2$ be the integer such that $k=2\ell$. First, suppose for a contradiction that $\SP(n,k) \geq \binom{n/2}{2}+2$. Using this, together with $n=6\ell-2$, $c=2$ and $r=2\ell-2$, Theorem~\ref{t:upperBound} implies that
\begin{equation}\label{E:upperBound2}
\ceil{\left(1- \tfrac{6\ell - 6}{6\ell - 2}\right)\left(\tbinom{3\ell - 1}{2}+2\right)} + \L_2\left(\floor{\tfrac{6\ell - 6}{6\ell - 2}\left(\tbinom{3\ell - 1}{2}+2\right)}\right) \leq 6\ell - 3.
\end{equation}
Observe that $\tceil{(1- \tfrac{6\ell - 6}{6\ell - 2})(\tbinom{3\ell - 1}{2}+2)}=\tceil{3\ell-2+\frac{4}{3\ell - 1}}=3\ell - 1$ and $\tfloor{\tfrac{6\ell - 6}{6\ell - 2}(\tbinom{3\ell - 1}{2}+2)} = \tfloor{\tbinom{3\ell-2}{2}+\tfrac{6\ell-6}{3\ell-1}}=\tbinom{3\ell-2}{2}+1$. So \eqref{E:upperBound2} is equivalent to $3\ell - 1 + \L_2\left(\tbinom{3\ell-2}{2}+1\right) \leq 6\ell-3$. Clearly $\L_2\left(\binom{3\ell-2}{2}+1\right)> 3\ell-2$, and hence we have a contradiction. Thus $\SP(n,k) \leq \binom{n/2}{2}+1$.

Now we proceed to show $\SP(n,k) \geq \binom{n/2}{2}$. Observe that $k-2 \equiv 0 \mod{2}$ since $k$ is even. Thus, by Lemma~\ref{l:mainConstruction}(b), with $m = 2$ and $h=\frac{n}{2}$, we know there exists an $(n,k)$-Sperner partition system with $p$ partitions, where \[p = \min\left\{\Bigl\lfloor\mfrac{n^2}{8}\Bigr\rfloor,\Bigl\lfloor\mfrac{2\binom{n/2}{3}}{k-2}\Bigr\rfloor\right\}.\]
Noting that $\frac{2}{k-2}\binom{n/2}{3} = \binom{n/2}{2}$ for $n = 3k-2$ and $\binom{n/2}{2}<\frac{1}{8}n^2$, it is apparent that $p = \binom{n/2}{2}$ and the result therefore follows.
\end{proof}

In \cite[Theorem~4.1]{LiMeagher}, it was shown that $\SP(3k-1,k) \geq 3k-1$ for all integers $k \geq 3$. Using \eqref{e:mono}, Theorem~\ref{T:3ktake2case} provides a substantial improvement to this bound for even integers $k \geq 6$.

\section{Proof of Theorem~\ref{t:k=3Asymptotics}(a)}\label{S:th4a}

In this section and the next we extend an approach detailed in \cite[Lemmas~11 and 10]{Paper1} to prove, respectively, Theorem~\ref{t:k=3Asymptotics}(a) and Theorem~\ref{t:k=3Asymptotics}(b). In this section we are interested in parameter sets $(n,k)$ such that $k$ is odd and $n \equiv k+1 \mod{2k}$, in accordance with the hypotheses of Theorem~\ref{t:k=3Asymptotics}(a). For a given parameter set $(n,k)$, our overall approach will be as follows. In Definition~\ref{D:IPk+1mod2k} we define an integer program $\mathcal{I}_{(n,k)}$ and then, in Lemma~\ref{L:r1IPBound}, show that we can use an optimal solution of $\mathcal{I}_{(n,k)}$ to construct an $(n,k)$-Sperner partition system whose size is the optimal value of $\mathcal{I}_{(n,k)}$. Next, in Lemma~\ref{L:QAymptoticsk+1mod2k}, we establish that an obvious upper bound on the optimal value of $\mathcal{I}_{(n,k)}$ is asymptotic to $\MMS(n,k)$ and we then finally prove Theorem~\ref{t:k=3Asymptotics}(a) by showing that $\mathcal{I}_{(n,k)}$ achieves an optimal value asymptotic to this upper bound. For the basic concepts and definitions of integer and linear programming we direct the reader to \cite{Padberg}.

We now introduce some definitions and notation that we will use extensively throughout this section. We let $d$ be the integer such that $c=2d+1$, that is, such that $n=(2d+1)k+1$. We will construct our Sperner partition systems on a set $X=X_1 \cup X_2$ where $X_1$ and $X_2$ are disjoint sets such that $|X_1|=|X_2|=\frac{n}{2}$. For each nonnegative integer $i$, let
\begin{align*}
  \E_{i} &= \{E \subseteq X: |E \cap X_1|=i, |E \cap X_2|=2d+1-i\} \\
  \E^{*}_{i} &= \{E \subseteq X: |E \cap X_1|=i, |E \cap X_2|=2d+2-i\}.
\end{align*}
Note that the elements of $\E_i$ are $c$-sets and the elements of $\E^{*}_{i}$ are $(c+1)$-sets. For each $\ell \in \{0,\ldots,d\}$ define $\e_\ell=\binom{n/2}{d-\ell}\binom{n/2}{d+1+\ell}$ so that we have $|\E_{d-\ell}|=|\E_{d+1+\ell}|=\e_\ell$ and for each $\ell \in \{0,\ldots,d+1\}$ define $\e^*_\ell=\binom{n/2}{d+1-\ell}\binom{n/2}{d+1+\ell}$ so that we have $|\E^*_{d+1-\ell}|=|\E^*_{d+1+\ell}|=\e^*_\ell$. Of the integers in $\{0,\ldots,d\}$, let $u$ be the smallest that satisfies $a(u) \leq (k-1)b(u)$ where, for $x \in \{0,\ldots,d\}$,
\[a(x) = 2\medop\sum_{\ell = x+1}^{d}\e_\ell \quad \text{and} \quad
b(x) = \e^*_0+\medop\sum_{\ell = 1}^{x}\e^*_\ell.\]
Let $Q$ be the largest even integer that is at most $\frac{1}{k-1}a(u)$ and also let
\begin{align*}
\A&=\medop\bigcup_{i \in I}\E_{i} & \mbox{where}&& I&=\{0, \ldots, d - u - 1\} \cup \{d+u+2, \ldots, 2d + 1\}\\
\B&=\medop\bigcup_{i \in I^*}\E^{*}_{i} &\mbox{where}&& I^*&=\{d+1 - u, \ldots, d+1+u\}.
\end{align*}
Let $\F = \A \cup \B$. It is not hard to see that $u \leq d-1$ since $a(d-1)=2\e_d<2\e_0<2\e^*_0\leq 2b(d-1)$ and $k \geq 3$. Note that $\A$ contains $c$-sets and $\B$ contains $(c+1)$-sets, and that the sets in $\B$ are more ``balanced'' between $X_1$ and $X_2$ than the sets in $\A$. Obviously, no set in $\B$ can be a subset of a set in $\A$. Furthermore, no set in $\A$ can be a subset of a set in $\B$ because $\max\{|A \cap X_1|,|A \cap X_2|\}\geq d+u+2 $ for each $A \in \A$ and $\max\{|B \cap X_1|,|B \cap X_2|\} \leq d+1+u$ for each $B \in \B$. Thus $(X,\F)$ is a clutter. Also observe that $|\A| = a(u)$ and $|\B| = b(u)$. Note that all of the notation we just defined is implicitly dependent on the values of $n$ and $k$. These values will be clear from context, so this should not cause confusion.

We will construct a Sperner partition system using the sets in $\F$. Note that each partition in such a system will contain $k-1$ sets from $\A$ and one set from $\B$ and hence such a system can have size at most $Q+1$. Our construction depends on finding up to $Q$ disjoint triples of sets from $\F$ such that for each triple $\{E_1,E_2,E_3\}$ we have $E_1 \in \B$, $E_2,E_3 \in \A$ and $\sum_{i=1}^3|E_i \cap X_w|=3d+2$ for each $w \in \{1,2\}$. We encode this task in the  integer program below. We define $\eta^*_0,\ldots,\eta^*_u$ to be the unique sequence of integers such that $\lfloor\frac{1}{2}\eta^*_0\rfloor+\sum_{\ell=1}^u\eta^*_\ell=\frac{1}{2}Q$ and, for some $x \in \{0,\ldots,u\}$, we have $\eta^*_\ell=\e^*_\ell$ for $\ell \in \{0,\ldots,x-1\}$, $0 \leq \eta^*_x < \e^*_x$, $\eta^*_\ell=0$ for $\ell \in \{x+1,\ldots,u\}$ and, if $x=0$, $\eta^*_0 \equiv \e^*_0 \mod{2}$. Such a sequence exists since $\frac{1}{2}Q = \lfloor\frac{1}{2(k-1)}a(u)\rfloor \leq \lfloor\frac{1}{2}b(u)\rfloor=\lfloor\frac{1}{2}\e^*_0\rfloor+\sum_{\ell=1}^u\e^*_\ell$.

\begin{definition}\label{D:IPk+1mod2k}
For integers $k \geq 3$ and $n>2k$ with $k$ odd and $n \equiv k+1 \mod{2k}$, define $\mathcal{I}_{(n,k)}$ to be the integer program on nonnegative integer variables $x_{i,j}$ for all $(i,j) \in \Phi$, where
    \[\Phi=\{(i,j): u+1 \leq i\leq j \leq d \mbox{ and } j-i \leq u\},\]
that maximises $2\sum_{(i,j) \in \Phi} x_{i,j}$ subject to
\begin{alignat}{3}
 \medop\sum_{(i,i+\ell) \in \Phi}x_{i,i+\ell} & \leq \eta^*_\ell \quad && \mbox{for all $\ell \in \{1,\ldots,u\}$}\label{E:C1d}\\[1mm]
 \medop\sum_{(i,i) \in \Phi}x_{i,i} & \leq \lfloor\tfrac{1}{2}\eta^*_0\rfloor \label{E:C2d}\\[1mm]
 \medop\sum_{(\ell,j) \in \Phi }x_{\ell,j}+\medop\sum_{(i,\ell) \in \Phi }x_{i,\ell} & \leq \e_\ell   \quad && \mbox{for all $\ell \in \{u+1,\ldots,d\}$}.\label{E:C3d}
\end{alignat}
\end{definition}

Note that taking each variable to be 0 in $\mathcal{I}_{(n,k)}$ satisfies all of the constraints and hence a feasible solution exists. Also, twice the sum of \eqref{E:C1d} for $\ell \in \{1,\ldots,u\}$ and \eqref{E:C2d} has the objective function of $\mathcal{I}_{(n,k)}$ as its left hand side and, by our definition of $\eta^*_0,\ldots,\eta^*_u$, $Q$ as its right hand side. Hence the optimal value of $\mathcal{I}_{(n,k)}$ is at most $Q$. Further, each variable must be bounded above, since it appears in the objective function with positive coefficient.

\begin{lemma}\label{L:r1IPBound}
Let $k \geq 3$ and $n>2k$ be integers with $k$ odd and $n \equiv k+1 \mod{2k}$, and let $p$ be the optimal value of $\mathcal{I}_{(n,k)}$. Then there exists a $(n,k)$-Sperner partition system with $p$ partitions.
\end{lemma}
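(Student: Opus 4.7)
The plan is to take an optimal solution $\{x^*_{i,j}\}_{(i,j)\in\Phi}$ of $\mathcal{I}_{(n,k)}$, with objective value $p=2\sum_{(i,j)\in\Phi}x^*_{i,j}$, and use it to build a partial edge colouring $\gamma$ of the clutter $(X,\F)$ with $p$ colour classes satisfying the hypotheses of Lemma~\ref{l:colouring}. Because each $\E_i$ and each $\E^*_i$ is specified purely in terms of intersection sizes with $X_1$ and $X_2$, any permutation of $X_1$ or of $X_2$ is an automorphism of $(X,\F)$, so it will suffice to produce a $\gamma$ such that every colour class has exactly $k$ edges and has total degree $n/2$ in each of $X_1,X_2$; Lemma~\ref{l:colouring} applied to the partition $\{X_1,X_2\}$ of $X$ will then yield the required Sperner partition system.

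I would interpret each unit of $x_{i,j}$ as instructions to build a symmetric pair of colour classes. For each $(i,j)\in\Phi$ and each of the $x^*_{i,j}$ units, I would reserve two \emph{balanced triples}: $\{A_1,A_2,B_1\}$ with $A_1\in\E_{d-i}$, $A_2\in\E_{d+j+1}$, $B_1\in\E^*_{d+1-(j-i)}$, and $\{A_3,A_4,B_2\}$ with $A_3\in\E_{d+i+1}$, $A_4\in\E_{d-j}$, $B_2\in\E^*_{d+1+(j-i)}$. A direct computation shows that each such triple contributes $3d+2$ to the $|E\cap X_w|$-sum for each $w\in\{1,2\}$. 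Constraint \eqref{E:C1d} bounds, for each $\ell\in\{1,\ldots,u\}$, the number of such pairs with $j-i=\ell$ by $\eta^*_\ell$, and hence caps the $B$-set usage from each of $\E^*_{d+1-\ell}$ and $\E^*_{d+1+\ell}$ at $\eta^*_\ell\leq\e^*_\ell$; constraint \eqref{E:C2d} plays the analogous role for the sets in $\E^*_{d+1}$, two of which are consumed per unit of $x^*_{i,i}$; and constraint \eqref{E:C3d} caps the usage from each of $\E_{d-\ell}$ and $\E_{d+\ell+1}$ at $\e_\ell$. Hence all $p$ triples can be chosen using pairwise distinct sets from $\F$.

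To extend each of the $p$ colour classes (each initially containing one triple) to the $k$ edges required by Lemma~\ref{l:colouring}, I would add $k-3$ further $\A$-sets whose combined $|E\cap X_w|$-sum equals $n/2-(3d+2)=(2d+1)(k-3)/2$. The useful observation here is that the triple-selection step above uses equal numbers of $\A$-sets from $\E_{d-\ell}$ and $\E_{d+\ell+1}$ for each $\ell\in\{u+1,\ldots,d\}$, so the remaining $\A$-sets can be matched into $(a(u)-2p)/2$ pairs $\{A,A'\}$ with $A\in\E_{d-\ell}$ and $A'\in\E_{d+\ell+1}$, each pair contributing $(d-\ell)+(d+\ell+1)=2d+1$ to each $|E\cap X_w|$-sum. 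The bound $p\leq Q\leq a(u)/(k-1)$ gives $p(k-1)\leq a(u)$, so there are enough pairs to assign $(k-3)/2$ of them to each colour class. Each colour class then has exactly $k$ edges, and its $|E\cap X_w|$-sum is $(3d+2)+(2d+1)(k-3)/2=n/2$ for each $w\in\{1,2\}$, as required.

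The main obstacle is establishing the correspondence between the IP variables $x_{i,j}$ and the symmetric pairs of balanced triples cleanly, and then matching each of the three families of constraints to exactly the supply limit it is supposed to guarantee. Once that bookkeeping is in place, the remaining steps, pairing up the leftover $\A$-sets via the $X_1\leftrightarrow X_2$ symmetry and distributing them among the $p$ colour classes, are routine, and the degree-sum hypotheses of Lemma~\ref{l:colouring} follow immediately from the construction.
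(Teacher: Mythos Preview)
Your proposal is correct and follows essentially the same route as the paper's proof: the paper likewise builds, for each unit of $x_{i,j}$, a symmetric pair of colour classes seeded by the triples $\{\E_{d-i},\E_{d+1+j},\E^*_{d+1-(j-i)}\}$ and $\{\E_{d-j},\E_{d+1+i},\E^*_{d+1+(j-i)}\}$, checks that the three constraint families \eqref{E:C1d}--\eqref{E:C3d} exactly prevent overuse of each $\E^*$- and $\E$-family, and then pads each colour class with $(k-3)/2$ complementary pairs from the unused $\A$-sets using $p(k-1)\leq a(u)$. Your bookkeeping and degree-sum verifications match theirs.
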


\begin{proof}
Consider an arbitrary optimal solution $\{x_{i,j}:(i,j) \in \Phi\}$. This solution has objective value $p$ where $p \leq Q$. We will use this solution to create a partial edge colouring of the clutter $H=(X,\F)$ with $p$ colours and then apply Lemma~\ref{l:colouring} to construct a Sperner partition system.

Note that any permutation of $X_w$ is an automorphism of $H$ for each $w \in \{1,2\}$. Define a set of colours $C=\bigcup_{(i,j) \in \Phi}(C_{i,j} \cup C'_{i,j})$, where $|C_{i,j}|=|C'_{i,j}|=x_{i,j}$ for each $(i,j) \in \Phi$, and note that $|C|=p$. By Lemma~\ref{l:colouring} it suffices to find a partial edge colouring $\gamma_1$ of $H$ with colour set $C$ such that, for each $z \in C$, $|\gamma^{-1}_1(z)|=k$ and $\sum_{x \in X_w}\deg^{\gamma_1}_z(x)=kd+\frac{k+1}{2}$ for $w \in \{1,2\}$. We first create a partial edge colouring $\gamma_0$ of $H$ with three sets in each colour class which we will later extend to the desired colouring $\gamma_1$. We create this colouring $\gamma_0$ by beginning with all edges of $H$ uncoloured and then choosing certain edges to go in colour classes. We first describe this process and then justify that we can in fact perform it to obtain $\gamma_0$.

For each $(i,j) \in \Phi$, one at a time in arbitrary order, we proceed as follows. For each $z \in C_{i,j} \cup C'_{i,j}$ we assign colour $z$ to three previously uncoloured edges:
\begin{itemize}
    \item
one from each of $\E_{d-i}$, $\E_{d+1+j}$ and $\E^*_{d+1+i-j}$ if $z \in C_{i,j}$; and
    \item
one from each of $\E_{d-j}$, $\E_{d+1+i}$ and $\E^*_{d+1+j-i}$ if $z \in C'_{i,j}$.
\end{itemize}
Because $(i,j) \in \Phi$, it can be checked that all the edges we colour are in $\F=\A \cup \B$. Further, observe that we will have $\sum_{x \in X_w}\deg^{\gamma_0}_{z}(x)=3d+2$ for each $w \in \{1,2\}$ and $z \in C$.

After this process is completed for each $(i,j) \in \Phi$, we call the resulting colouring $\gamma_0$. We will be able to perform this process provided that we do not attempt to colour more than $|\E_{i}|$ sets in $\E_{i}$ for any $i \in I$ or more than $|\E^*_{i}|$ sets in $\E^*_{i}$ for any $i \in I^*$.

\begin{itemize}
    \item[(i)]
Let $\ell \in \{1,\ldots,u\}$. Each of the $\sum_{(i,i+\ell) \in \Phi}x_{i,i+\ell}$ colours in $\bigcup_{(i,i+\ell) \in \Phi}C_{i,i+\ell}$ is assigned to exactly one of the edges in $\E^*_{d+1-\ell}$ and no other colours are assigned to these edges. Similarly, each of the $\sum_{(i,i+\ell) \in \Phi}x_{i,i+\ell}$ colours in $\bigcup_{(i,i+\ell) \in \Phi}C'_{i,i+\ell}$ is assigned to exactly one of the edges in $\E^*_{d+1+\ell}$ and no other colours are assigned to these edges. Thus, since $\eta^*_\ell \leq \e^*_\ell = |\E^*_{d+1-\ell}|=|\E^*_{d+1+\ell}|$, we do not run out of sets in $\E^*_{d+1-\ell}$ or $\E^*_{d+1+\ell}$ by \eqref{E:C1d}.
\item[(ii)]
Each of the $2\sum_{(i,i) \in \Phi}x_{i,i}$ colours in $\bigcup_{(i,i) \in \Phi}(C_{i,i} \cup C'_{i,i})$ is assigned to exactly one of the edges in $\E^*_{d+1}$ and no other colours are assigned to these edges. Thus, since $\eta^*_0 \leq \e^*_0 =  |\E^*_{d+1}|$, we do not run out of sets in $\E_{d+1}$ by \eqref{E:C2d}.
    \item[(iii)]
Let $\ell \in \{u+1,\ldots,d\}$. Each of the $\sum_{(\ell,j) \in \Phi}x_{\ell,j}$ colours in $\bigcup_{(\ell,j) \in \Phi}C_{\ell,j}$ and each of the $\sum_{(i,\ell) \in \Phi}x_{i,\ell}$ colours in $\bigcup_{(i,\ell) \in \Phi}C'_{i,\ell}$ is assigned to exactly one of the edges in $\E_{d-\ell}$ and no other colours are assigned to these edges. Similarly each of the $\sum_{(i,\ell) \in \Phi}x_{i,\ell}$ colours in $\bigcup_{(i,\ell) \in \Phi}C_{i,\ell}$ and each of the $\sum_{(\ell,j) \in \Phi}x_{\ell,j}$ colours in $\bigcup_{(\ell,j) \in \Phi}C'_{\ell,j}$ is assigned to exactly one of the edges in $\E_{d+1+\ell}$ and no other colours are assigned to these edges. Thus, since $\e_\ell=|\E_{d-\ell}|=|\E_{d+1+\ell}|$, we do not run out of sets in $\E_{d-\ell}$ or $\E_{d+1+\ell}$ by \eqref{E:C3d}.
\end{itemize}

So the colouring $\gamma_0$ does indeed exist. We now extend $\gamma_0$ to the desired colouring $\gamma_1$. Note that if $k=3$ this process will be trivial and $\gamma_1$ will equal $\gamma_0$. Let $\A^\dag$ be the set of all edges in $\A$ that are not coloured by $\gamma_0$. Because $\gamma_0$ has $p$ colour classes, each containing two edges in $\A$, we have $|\A^\dag|=|\A|-2p$. Now $|\A| \geq p(k-1)$ since $|\A|=a(u)$ and $p \leq Q \leq \frac{1}{k-1}a(u)$. Thus $|\A^\dag| \geq p(k-3)$. For each $\ell \in \{u+1,\ldots,d\}$ we have $|\A^\dag \cap \E_{d-\ell}|=|\A^\dag \cap \E_{d+1+\ell}|$ by the way we created $\gamma_0$. Thus we can create a partition $\A^\ddag$ of $\A^\dag$ into pairs such that for each pair $\{E,E'\}$ we have $E \in \E_{d-\ell}$ and $E' \in \E_{d+1+\ell}$ for some $\ell \in \{u+1,\ldots,d\}$. We form $\gamma_1$ from $\gamma_0$ by adding to each colour class the edges from $\frac{k-3}{2}$ pairs in $\A^\ddag$ in such a way that no pair is allocated to two different colour classes. This is possible because $|\A^\ddag|=\frac{1}{2}|\A^\dag|\geq p\frac{k-3}{2}$.
We claim that $\gamma_1$ has the required properties. To see this, note that $|\gamma^{-1}_1(z)|=k$ for each $z \in C$ because each colour class in $\gamma_0$ contained 3 edges and had $k-3$ edges added to it to form $\gamma_1$. Further, for each $z \in C$ and $w \in \{1,2\}$, $\sum_{x \in X_w}\deg^{\gamma_1}_z(x)=kd+\frac{k+1}{2}$ because $\sum_{x \in X_w}\deg^{\gamma_0}_z(x)=3d+2$ and $|E \cap X_w|+|E' \cap X_w|=2d+1$ for each pair $\{E,E'\}$ in $\B^\ddag$.
\end{proof}

Next we show that $Q$ is asymptotic to $\MMS(n,k)$. We shall require an easy consequence of Stirling's approximation (see \cite{RomikStirling} for example), namely that as $x$ and $y$ tend to infinity with $y \leq \frac{x}{2}$,
\begin{equation}\label{E:StirlingApprox}
\mbinom{x}{y} \sim A(x,y) \quad \text{where} \quad A(x,y) = \mfrac{x^{x+1/2}}{(2\pi)^{1/2}y^{y+1/2}(x-y)^{x-y+1/2}}.
\end{equation}

\begin{lemma}\label{L:QAymptoticsk+1mod2k}
Let $n$ and $k$ be integers such that $n \rightarrow \infty$ with $n \equiv k+1 \mod{2k}$, $k=o(n)$, and $k \geq 3$ is odd. Then $Q \sim \MMS(n,k)$.
\end{lemma}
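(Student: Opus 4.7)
The plan is to prove $Q \sim \MMS(n, k)$ by carrying out a Gaussian/Stirling asymptotic analysis of the hypergeometric sums $a(u)$ and $b(u)$. First I would simplify $\MMS$: since $r = 1$ and $c = 2d + 1$, we have $\MMS(n, k) = \binom{n}{c}/(k - 1 + (c+1)/(n-c))$, and the hypothesis $k = o(n)$ forces $c \to \infty$ and $(c+1)/(n-c) = (2d+2)/((2d+1)(k-1)+1) \to 1/(k-1)$, giving $\MMS(n,k) \sim (k-1)\binom{n}{c}/((k-1)^2 + 1)$.

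Next I would approximate each $\e_\ell$ and $\e^*_\ell$ via Stirling, that is, via \eqref{E:StirlingApprox}. Each is a hypergeometric mass function: $\e_\ell/\binom{n}{c} = P(X = d - \ell)$ where $X$ counts the intersection with $X_1$ when $c$ items are drawn uniformly at random from $X$, a distribution with variance $\sigma^2 \sim c(k-1)/(4k) = \Theta(d)$ and mean $c/2$; likewise $\e^*_\ell/\binom{n}{c+1}$ is the analogous probability for $c+1$ draws. The Vandermonde equalities $2\sum_{\ell=0}^d \e_\ell = \binom{n}{c}$ and $\e^*_0 + 2\sum_{\ell=1}^{d+1}\e^*_\ell = \binom{n}{c+1}$ pin down the total masses, while the ratios $\binom{n}{c+1}/\binom{n}{c} = (n-c)/(c+1) \to k-1$ and $\e^*_\ell/\e_\ell = (n/2-d+\ell)/(d+1-\ell) \to k-1$ (uniformly for $\ell = o(d)$) relate the two families.

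I would then analyse the critical $u$. Writing $S = \sum_{\ell=0}^{u}\e_\ell$, the relation $\e^*_\ell \sim (k-1)\e_\ell$ gives $b(u) \sim 2(k-1) S$, with the factor $2$ accounting for the symmetric contribution of both sides of the centre to $|\B|$. The condition $a(u) \leq (k-1)b(u)$ becomes $\binom{n}{c} - 2S \leq 2(k-1)^2 S(1+o(1))$, yielding $S \sim \binom{n}{c}/(2((k-1)^2 + 1))$ at the critical $u$, and hence $a(u) \sim (k-1)^2\binom{n}{c}/((k-1)^2 + 1)$, so that $Q \sim a(u)/(k-1) \sim \MMS(n,k)$ after absorbing the rounding present in the definition of $Q$.

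The main obstacle will be establishing quantitative, $k$-uniform error estimates: for bounded $k$ the ratio $\e^*_\ell/\e_\ell$ has a multiplicative error of order $\ell/d$, which matters at the critical scale $u \asymp \sqrt{d}$, while for $k \to \infty$ the Gaussian width $\sqrt{d}$ must be tracked carefully alongside $k$. Verifying that $u$ sits within the Gaussian bulk, so that $\e_u = o(S)$ and the tail contribution $\sum_{\ell > u}\e_\ell$ is close to $\binom{n}{c}/2 - S$, is a secondary but important technical point.
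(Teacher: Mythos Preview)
Your plan is sound and would succeed, but it differs substantially from the paper's argument. What you describe---Stirling/Gaussian asymptotics for the hypergeometric masses $\e_\ell$ and $\e^*_\ell$, locating $u$ via the threshold condition, and reading off $a(u)$---is essentially the method the paper deploys in the \emph{next} section for the companion case $n\equiv k-1\pmod{2k}$ (Lemma~\ref{L:QAymptoticsk-1mod2k}). For the present lemma the paper takes a shorter and more robust route. The upper bound $Q\leq\MMS(n,k)$ drops out in one line from the LYM inequality applied to the clutter $(X,\F)$: since $|\A|=a(u)$, $|\B|=b(u)$ and $b(u)\geq a(u)/(k-1)$, LYM yields $a(u)/(k-1)\leq\binom{n}{c}/\bigl(k-1+\tfrac{c+1}{n-c}\bigr)=\MMS(n,k)$ exactly, with no asymptotics at all. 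For the lower bound, the paper double-counts pairs $(S,B)$ with $S\subseteq B$, $|S|=c$, $|B|=c+1$, over appropriate index ranges; this produces an exact linear identity relating $a(u-1)$ and $b(u-1)$ up to an $O(\e_{u-1})$ error, and combining it with the minimality of $u$ (namely $a(u-1)>(k-1)b(u-1)$) gives $Q>\MMS(n,k)(1-o(1))$ using only the single input $\e_0=o\bigl(\binom{n}{c}\bigr)$. Thus the paper never locates $u$, never checks that $u$ sits in the Gaussian bulk, and never needs uniform control of $\e^*_\ell/\e_\ell$---precisely the obstacles you flag. Your approach has the advantage of giving the asymptotic value of $u$ as a by-product, but it must separately treat the regime $k\gg d^{1/4}$, in which $u=0$ and the approximate equality $a(u)\approx(k-1)b(u)$ that drives your computation of $S$ fails (one has $a(0)\sim\binom{n}{c}$ while $(k-1)b(0)=\Theta\bigl(k^2\binom{n}{c}/\sqrt{d}\bigr)$); there the conclusion still holds but must be reached directly via $a(0)/(k-1)\sim\binom{n}{c}/(k-1)\sim\MMS(n,k)$.
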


\begin{proof}
Observe that $d \rightarrow \infty$ since $k=o(n)$. Furthermore, as $c=2d+1$ and $r=1$ in this case, we have
\begin{equation}\label{e:mmsexpr}
\MMS(n,k)=\mfrac{1}{k-1+\frac{2d+2}{n-2d-1}}\mbinom{n}{2d+1}.
\end{equation}
Recall that $(X,\F)$ is a clutter with $a(u)$ edges of size $2d+1$ and $b(u)$ edges of size $2d+2$. So, by the LYM-inequality (see \cite[p. 25]{FraTok}),  $a(u)/\binom{n}{2d+1}+b(u)/\binom{n}{2d+2} \leq 1$ or, equivalently, $a(u)+\frac{2d+2}{n-2d-1}b(u) \leq \binom{n}{2d+1}$. Thus, because $\frac{1}{k-1}a(u) \leq b(u)$ by the definition of $u$,
\[Q \leq \mfrac{a(u)}{k-1}  \leq \mfrac{1}{k-1+\frac{2d+2}{n-2d-1}}\mbinom{n}{2d+1} = \MMS(n,k).\]
So it remains to show that $Q \geq \MMS(n,k)(1-o(1))$.


For technical reasons, we extend the definitions of $a$, $b$ and $\e$ given at the start of this section by defining $a(-1)=\binom{n}{2d+1}$, $b(-1)=0$ and $\e_{-1}=0$. Using the definitions of $Q$ and $a$,
\begin{equation}\label{e:adiff}
Q > \mfrac{a(u)}{k-1}-2 = \mfrac{a(u-1)-2\e_{u}}{k-1}.
\end{equation}
We will bound $Q$ by bounding $a(u-1)$ below and applying \eqref{e:adiff}.

We first show that
\begin{equation}\label{e:coverage2}
(2d+2)b(u-1) = (n-2d-1)\left(\mbinom{n}{2d+1}-a(u-1)\right)-(n-2d-2u)\e_{u-1}.
\end{equation}
We may assume $u \geq 1$ for otherwise $u=0$, $a(u-1)=\binom{n}{2d+1}$, $b(u-1)=0$, $\e_{u-1}=0$ and \eqref{e:coverage2} holds.
Define
\begin{align*}
\mathcal{C}&=\medop\bigcup_{i \in I}\E_{i}, &\mbox{where}&& I&=\{0, \ldots, d - u\} \cup \{d+u+1, \ldots, 2d + 1\} \\
\mathcal{D}&=\medop\bigcup_{i \in I^*}\E^{*}_{i}, &\mbox{where}&& I^*&=\{d - u+2, \ldots d+u\}\\
\overline{\mathcal{C}}&=\medop\bigcup_{i \in \overline{I}}\E_{i}, &\mbox{where}&& \overline{I}&=\{d-u+1, \ldots d+u\}.
\end{align*}
Thus we have $\overline{\mathcal{C}}=\binom{X}{2d+1} \setminus \mathcal{C}$, $|\mathcal{C}|=a(u-1)$, $|\mathcal{D}|=b(u-1)$ and $|\overline{\mathcal{C}}|=\binom{n}{2d+1}-a(u-1)$. We now count, in two ways, the number of pairs $(S,B)$ such that $S \in \overline{\mathcal{C}}$, $B \in \mathcal{D}$ and $S \subseteq B$.
\begin{itemize}
    \item
Each of the $b(u-1)$ sets in $\mathcal{D}$ has exactly $2d+2$ subsets in $\binom{X}{2d+1}$ and each of these is in $\overline{\mathcal{C}}$, because no set in $\mathcal{C}$ is a subset of a set in $\mathcal{D}$.
    \item
Each of the $\binom{n}{2d+1}-a(u-1)$ sets in $\overline{\mathcal{C}}$ has $n-2d-1$ supersets in $\binom{X}{2d+2}$. For each $S \in \overline{\mathcal{C}} \setminus {(\E_{d-u+1} \cup \E_{d+u})}$, all of these supersets of $S$ are in $\mathcal{D}$. For each of the $2\e_{u-1}$ sets $S \in \E_{d-u+1} \cup \E_{d+u}$, exactly $\frac{n}{2}-d-u$ of these supersets of $S$ are not in $\mathcal{D}$.
\end{itemize}
Equating our two counts, we see that \eqref{e:coverage2} does indeed hold.

By definition of $u$, $\frac{1}{k-1}a(u-1) > b(u-1)$. Substituting this into \eqref{e:coverage2} and solving for $\frac{1}{k-1}a(u-1)$ we see
\begin{equation}\label{e:coverage3}
\mfrac{a(u-1)}{k-1} > \frac{\binom{n}{2d+1}-\frac{n-2d-2u}{n-2d-1}\e_{u-1}}{k-1+\frac{2d+2}{n-2d-1}}.
\end{equation}
Substituting \eqref{e:coverage3} into \eqref{e:adiff} and then manipulating,  we obtain
\begin{equation}\label{e:finalmess}
Q > \frac{\binom{n}{2d+1}-\frac{n-2d-2u}{n-2d-1}\e_{u-1}}{k-1+\frac{2d+2}{n-2d-1}}-\mfrac{2\e_{u}}{k-1}-2 = \frac{\binom{n}{2d+1}-\frac{n-2d-2u}{n-2d-1}\e_{u-1}-2(1+\frac{2d+2}{(k-1)(n-2d-1)})\e_{u}}{k-1+\frac{2d+2}{n-2d-1}}-2
\end{equation}
Observing that the coefficients of $\e_{u-1}$ and $\e_{u}$ in the numerator of the final expression above are clearly $O(1)$ and that $\e_{u} \leq \e_{u-1} \leq \e_0$, we see from \eqref{e:mmsexpr} that \eqref{e:finalmess} implies that $Q > \MMS(n,k)(1-o(1))$ provided that $\e_0=o(\binom{n}{2d+1})$. To see this is the case, observe that
\[
\frac{\e_0}{\binom{n}{2d+1}} \sim \mfrac{A(\tfrac{n}{2},d)A(\tfrac{n}{2},d+1)}{A(n,2d+1)} \leq \mfrac{\left(A(\tfrac{n}{2},d+\tfrac{1}{2})\right)^2}{A(n,2d+1)} = \sqrt{\tfrac{2n}{\pi(2d+1)(n-2d-1)}} \sim \sqrt{\tfrac{k}{\pi d(k-1)}} = o(1).
\]
where the first $\sim$ uses \eqref{E:StirlingApprox} and the last was obtained using $n \sim 2dk$. So the proof is complete.
\end{proof}

We define the \emph{slack} in an inequality $f \leq g$ to be $g-f$. We will refer to this particularly in the case of the constraints \eqref{E:C1d}--\eqref{E:C3d} and, in the next section, the constraints \eqref{E:C1}--\eqref{E:C3}. Note a candidate solution to an integer program is feasible if and only if the slack in each of its constraints is nonnegative. Our next result shows that the optimal value of $\mathcal{I}_{(n,k)}$ is close to $Q$.

\begin{lemma}\label{L:IPachieves}
Let $k \geq 3$ and $n>2k$ be integers with $k$ odd and $n \equiv k+1 \mod{2k}$. The optimal value of $\mathcal{I}_{(n,k)}$ is at least $Q-2\binom{u}{2}-\frac{2(d-u+1)}{k-1}$.
\end{lemma}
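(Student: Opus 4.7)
My plan is to construct an explicit feasible solution $(x^*_{i,j})_{(i,j) \in \Phi}$ to the integer program $\mathcal{I}_{(n,k)}$ whose objective value meets the claimed lower bound. The objective $2 \sum_{(i,j) \in \Phi} x_{i,j}$ is bounded above by $Q$ by summing the constraints \eqref{E:C1d} and \eqref{E:C2d}, so the task is to saturate these constraints as much as possible while respecting the level-capacity constraints \eqref{E:C3d}.

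The construction I propose is a balanced distribution: for each $m \in \{0, 1, \ldots, u\}$, set $x^*_{i, i+m}$ to approximately $\bar{\eta}^*_m / N_m$ for each $(i, i+m) \in \Phi$, where $\bar{\eta}^*_0 = \lfloor \eta^*_0/2 \rfloor$, $\bar{\eta}^*_m = \eta^*_m$ for $m \geq 1$, and $N_m$ denotes the number of pairs of difference $m$ in $\Phi$. A matrix-balancing scheme in the spirit of Lemma~\ref{l:balancedmatrix} can be used to distribute the fractional remainders so that \eqref{E:C1d} and \eqref{E:C2d} are essentially saturated without significant loss.

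The crux is verifying \eqref{E:C3d}. With a uniform distribution, the load at an interior level $\ell$ is approximately $2\sum_{m=0}^u \bar{\eta}^*_m / N_m$, while loads at the extremal levels $\ell = u+1$ and $\ell = d$ are essentially half that. Since $\e_\ell = \binom{n/2}{d-\ell}\binom{n/2}{d+1+\ell}$ decreases as $\ell$ approaches $d$, the bottleneck constraints occur there. Crucially, the total capacity $\sum_\ell \e_\ell = a(u)/2 \geq (k-1)Q/2$ exceeds the total load of $Q$ by a factor of order $(k-1)/2$ thanks to the defining property of $u$, so on average the bound is not tight. Wherever the uniform load would exceed $\e_\ell$ at some level, I will reduce the relevant $x^*_{i,j}$'s to restore feasibility; each such unit reduction lowers the objective by $2$.

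The main obstacle is the precise accounting of the total reduction. I expect the $\frac{2(d-u+1)}{k-1}$ term to arise from the unavoidable underuse at the roughly $d-u+1$ levels whose capacity constraint can bind, each absorbing loss of order $2/(k-1)$ of the average; the $2\binom{u}{2} = u(u-1)$ term I expect to arise from boundary effects, since for each $m \in \{1, \ldots, u\}$ the pairs $(i, i+m)$ with $i$ close to $u+1$ or with $i+m$ close to $d$ force rounding losses that accumulate to $\Theta(u^2)$ as we sum over all differences $m$. Working out the exact constants will require a careful case-by-case analysis at both ends of the index range.
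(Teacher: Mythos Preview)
Your proposal has a genuine gap. The capacities $\e_\ell$ on the right of \eqref{E:C3d} fall off sharply---eventually exponentially---as $\ell$ increases from $u+1$ toward $d$, so a uniform load of roughly $Q/(d-u)$ per level massively overshoots the constraints at large $\ell$. Simply reducing the offending variables, without reallocating the excess to levels with spare capacity, therefore costs a constant fraction of $Q$, not $O\bigl(u^2+d/(k-1)\bigr)$. Concretely, take $k=3$ and $n=22$: then $d=3$, $u=0$, $Q=30822$, and $(\e_1,\e_2,\e_3)=(25410,5082,330)$. Your uniform choice $x_{1,1}=x_{2,2}=x_{3,3}\approx 5137$ violates \eqref{E:C3d} at $\ell=2$ and $\ell=3$, and after your reduction step the objective is only about $15686$, nowhere near the target $Q-4=30818$ (the true optimum here is $Q$ itself). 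Your identification of the two error terms with ``boundary'' and ``rounding'' effects does not match their actual provenance.

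The paper's argument is entirely different and does not use a closed-form assignment. It runs a greedy procedure: starting from the zero solution, at each step it takes the largest $y\in\{1,\ldots,u\}$ for which the slack $\beta_y$ in \eqref{E:C1d} is still at least $y$ (falling back to $y=0$ if none exists and $\beta_0\geq (d-u+1)/(k-1)$), then takes the \emph{largest} level $z$ at which \eqref{E:C3d} still has room, and increments a chain of $y$ difference-$y$ variables occupying the $2y$ consecutive levels $z-2y+1,\ldots,z$ (or a single diagonal variable when $y=0$). The key invariant is that the total slack in the constraints \eqref{E:C3d} is always at least $k-1$ times the total slack in \eqref{E:C1d}--\eqref{E:C2d}: this holds at the outset because $a(u)\geq (k-1)Q$, and is preserved because each step consumes these two totals in ratio $2{:}1\leq (k-1){:}1$. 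This invariant, combined with always choosing the largest available $z$, guarantees that a suitable $z$ exists at every step. The procedure halts only once $\beta_\ell\leq \ell-1$ for each $\ell\in\{1,\ldots,u\}$ and $\beta_0<(d-u+1)/(k-1)$; summing these residual slacks gives exactly $\binom{u}{2}+(d-u+1)/(k-1)$, and hence the claimed lower bound on the objective.
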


\begin{proof}
For $t \in \{1,\ldots,u\}$, let $\beta_t$ be the slack in the constraint of $\mathcal{I}_{(n,k)}$ given by setting $\ell=t$ in \eqref{E:C1d}. Also let $\beta_0$ be the slack in the constraint \eqref{E:C2d} of $\mathcal{I}_{(n,k)}$. Similarly, for $t \in \{u+1,\ldots,d\}$, let $\alpha_{t}$ be the slack in the constraint of $\mathcal{I}_{(n,k)}$ given by setting $\ell=t$ in \eqref{E:C3d}. We will create a solution to $\mathcal{I}_{(n,k)}$ with objective value at least $Q-2\binom{u}{2}-\frac{2(d-u+1)}{k-1}$. We do this by beginning with the solution to $\mathcal{I}_{(n,k)}$ in which all the variables are 0 and iteratively improving the solution. Each step of the iteration proceeds as follows.
\begin{itemize}
    \item[(i)]
Take the existing solution. Let $y$ be the largest element of $\{1,\ldots,u\}$ for which $\beta_{y}\geq y$ if such an element exists. If no such element exists, let $y=0$ if $\beta_0\geq\frac{d-u+1}{k-1}$ and otherwise terminate the procedure.
    \item[(ii)]
Let $z$ be the largest element of $\{u+1,\ldots,d\}$ for which $\alpha_{z}\geq \delta$, where $\delta=1$ if $y \geq 1$ and $\delta=2$ if $y=0$. We claim that $z$ exists, $z \geq u+2y$ and, if $y \geq 1$, then $\alpha_{\ell}\geq 1$ for each $\ell \in \{z-2y+1,\ldots,z\}$.
    \item[(iii)]
If $y \geq 1$, increase the value of each of the $y$ variables in $\{x_{z-i,z-y-i}: i \in \{0,\ldots,y-1\}\}$ by 1. This results in $\beta_{y}$ decreasing by $y$, $\alpha_{\ell}$ decreasing by 1 for each $\ell \in \{z-2y+1,\ldots,z\}$, and all other $\alpha_\ell$ and $\beta_\ell$ remaining unchanged.
    \item[(iv)]
If $y = 0$, increase the value of the variable $x_{z,z}$ by 1. This results in $\beta_0$ decreasing by $1$, $\alpha_{z}$ decreasing by 2, and all other $\alpha_\ell$ and $\beta_\ell$ remaining unchanged.
\end{itemize}
Provided the claim in (ii) holds, it can be seen that this procedure will terminate with a solution in which $\beta_0\leq\frac{d-u+1}{k-1}$ and $\beta_{\ell}\leq\ell-1$ for each $\ell \in \{1,\ldots,u\}$. As noted just below Definition~\ref{D:IPk+1mod2k}, twice the sum of the constraints \eqref{E:C1d} for $\ell \in \{1,\ldots,u\}$ and \eqref{E:C2d} has the objective function of $\mathcal{I}_{(n,k)}$ as its left hand side and $Q$ as its right hand side. Thus this solution will have objective value at least $Q-2\binom{u}{2}-\frac{2(d-u+1)}{k-1}$ since
\[\medop\sum_{\ell=0}^u \beta_\ell \leq \mfrac{d-u+1}{k-1} +\medop\sum_{\ell=l}^u (\ell-1) = \mbinom{u}{2}+\mfrac{d-u+1}{k-1}.\]
So it suffices to show that the claim in (ii) holds in each step.

Throughout the process none of the $\alpha_\ell$ and $\beta_\ell$ ever increase and thus the values of $y$ and $z$ chosen at each step form nonincreasing sequences.
Let $\alpha=\sum_{\ell=u+1}^{d} \alpha_\ell$ and $\beta= \sum_{\ell=0}^u \beta_\ell$.
At the beginning of the process, $\alpha=\frac{1}{2}a(u)$ and $\beta=\frac{1}{2}Q$ and we have $a(u) \geq (k-1)Q$ by the definition of $Q$. Further, at each step of the process the reduction in $\alpha$ is exactly twice the reduction in $\beta$. So, because $k-1\geq 2$, at any step of the process $\alpha\geq(k-1)\beta$. Fix a step and the values of $y$ and $z$ at this step. We will show the claim in (ii) holds for this step
by considering cases according to whether $y=0$.

\textbf{Case 1.} Suppose that $y\geq1$. Since $\eta^*_y \geq \beta_y > 0$, we have $\eta^*_\ell= \e^*_\ell$ for each $\ell \in \{1,\ldots,y-1\}$ by our definition of $\eta^*_0,\ldots,\eta^*_u$.  Further, for each $\ell \in \{1,\ldots,y-1\}$, $\beta_{\ell}$ has not so far been decreased and hence $\beta_\ell=\eta^*_\ell=\e^*_\ell$. Thus
\begin{equation}\label{E:alphaLowerBound}
\alpha \geq (k-1)\beta \geq \tfrac{k-1}{2}\e^*_0+(k-1)\medop\sum_{\ell=1}^{y-1}\e^*_\ell.
\end{equation}
In particular $\alpha$ is positive and hence $z$ exists. By our choice of $z$, $\alpha_{\ell}= 0$ for each $\ell \in \{z+1,\ldots,d\}$. Thus, if $z < u+2y$ then we would have
\[\alpha \leq \medop\sum_{\ell=u+1}^{u+2y-1}\e_{\ell} < \medop\sum_{\ell=u+1}^{u+2y-1}\e^*_\ell\]
where the second inequality follows from the definitions of $\e_{\ell}$ and $\e^*_\ell$ using the fact that $d+1<\frac{n}{2}$. But this can be seen to contradict \eqref{E:alphaLowerBound} by first using $k-1 \geq 2$ in \eqref{E:alphaLowerBound} and then applying $2y-1$ times the fact that $\e^*_{\ell_1} > \e^*_{\ell_2}$ for any integers $\ell_1$ and $\ell_2$ with $0 \leq \ell_1 < \ell_2 \leq d+1$. So $z \geq u+2y$. Finally note that, for any $\ell \in \{z-2y+1,\ldots,z\}$, any previous step of the process which decreased the slack in $\alpha_{\ell}$ also decreased the slack in $\alpha_{z}$ by an equal amount (namely 1). Thus, because at the start of the process $\alpha_{\ell}>\alpha_{z}$, this still holds at the present step and hence $\alpha_{\ell}\geq 1$. So the claim is proved.

\textbf{Case 2.} Suppose that $y=0$. Then $\beta_0 \geq \frac{d-u+1}{k-1}$ by our choice of $y$, so $\beta \geq \frac{d-u+1}{k-1}$ and hence $\alpha \geq d-u+1$. So, by the pigeonhole principle, $\alpha_{\ell}$ has slack at least $2$ for some $\ell \in \{u+1,\ldots,d\}$. Thus $z$ exists, and $z \geq u+2y=u$ trivially. So again the claim is proved.
\end{proof}

In the above result, note that $2\binom{u}{2}+\frac{2(d-u+1)}{k-1}$ is obviously $O(d^2)$ since $u \leq d$. 

\begin{proof}[\textbf{\textup{Proof of Theorem~\ref{t:k=3Asymptotics}(a).}}]
This follows from Lemmas~\ref{L:r1IPBound}, \ref{L:QAymptoticsk+1mod2k} and \ref{L:IPachieves}, noting that in the last of these our lower bound on the optimal value of the integer program is $Q-O(d^2)$ and hence is asymptotic to $Q$ because $Q \sim \MMS(n,k)$ and clearly $d^2=o(\MMS(n,k))$.
\end{proof}

\section{Proof of Theorem~\ref{t:k=3Asymptotics}(b)}\label{S:th4b}

In this section we are interested in parameter sets $(n,k)$ such that $k$ is odd and $n \equiv k-1 \mod{2k}$, in accordance with the hypotheses of Theorem~\ref{t:k=3Asymptotics}(b). For a given parameter set $(n,k)$, our overall approach in this section is similar to that of the last section. In Definition~\ref{D:IPk-1mod2k} we define an integer program $\mathcal{I}_{(n,k)}$ and then, in Lemma~\ref{L:rkIPBound}, show that we can use an optimal solution of $\mathcal{I}_{(n,k)}$ to construct an $(n,k)$-Sperner partition system whose size is the optimal value of $\mathcal{I}_{(n,k)}$. Next, in Lemma~\ref{L:QAymptoticsk-1mod2k}, we establish that an obvious upper bound on the optimal value of $\mathcal{I}_{(n,k)}$ is asymptotic to $\MMS(n,k)$ and we then finally prove Theorem~\ref{t:k=3Asymptotics}(b) by showing that, for $k \geq 5$, $\mathcal{I}_{(n,k)}$ achieves an optimal value asymptotic to this upper bound. Our proof of this last result differs substantially from the proof of the analogous result in Section~\ref{S:th4a}, however.  

We do not retain any of the notation defined in the last section. Instead, we will redefine most of it to suit our purposes here. Throughout we will let $d$ be the integer such that $c=2d$, that is, such that $n=(2d+1)k-1$. Again, we will construct our Sperner partition systems on a set $X=X_1 \cup X_2$ where $X_1$ and $X_2$ are disjoint sets such that $|X_1|=|X_2|=\frac{n}{2}$. For each nonnegative integer $i$, let
\begin{align*}
  \E_{i} &= \{E \subseteq X: |E \cap X_1|=i, |E \cap X_2|=2d-i\} \\
  \E^{*}_{i} &= \{E \subseteq X: |E \cap X_1|=i, |E \cap X_2|=2d+1-i\}.
\end{align*}
Note that the elements of $\E_i$ are $c$-sets and the elements of $\E^{*}_{i}$ are $(c+1)$-sets. In fact, when considered in terms of $c$, these definitions are the same as those given in the last section although they appear different when phrased in terms of $d$. For each $\ell \in \{0,\ldots,d\}$, define $\e_\ell=\binom{n/2}{d-\ell}\binom{n/2}{d+\ell}$ and $\e^*_\ell=\binom{n/2}{d-\ell}\binom{n/2}{d+1+\ell}$ so that we have $|\E_{d-\ell}|=|\E_{d+\ell}|=\e_\ell$ and $|\E^*_{d-\ell}|=|\E^*_{d+1+\ell}|=\e^*_\ell$. Of the integers in $\{-1,\ldots,d-1\}$, let $u$ be the largest that satisfies $(k-1)a(u) \leq b(u)$ where $a(-1)=0$, $b(-1)=\binom{n}{2d+1}$ and, for $x \in \{0,\ldots,d\}$,
\[a(x) = \e_0+2\medop\sum_{\ell = 1}^{x}\e_\ell \quad \text{and} \quad
b(x) = 2\medop\sum_{\ell = x+1}^{d}\e^*_\ell.\]
Let $Q=a(u)$ if $a(u)$ is even and $Q=a(u)-1$ if $a(u)$ is odd. Also let
\begin{align*}
\A&=\medop\bigcup_{i \in I}\E_{i}, & \mbox{where } I&=\{d - u, \ldots d+u\} \\
\B&=\medop\bigcup_{i \in I^*}\E^{*}_{i}, &\mbox{where } I^*&=\{0, \ldots, d - u - 1\} \cup \{d+u+2, \ldots, 2d + 1\}.
\end{align*}
We allow $u$ to equal $-1$ in our definition above to ensure it is always well-defined. When $u=-1$, we have that $\A$ is empty and Definition~\ref{D:IPk-1mod2k} and Lemma~\ref{L:rkIPBound} below are trivial.  However our main interest in this section is in the regime where $n$ is large compared to $k$ and for these cases Lemma~\ref{L:QAymptoticsk-1mod2k} below implies that $u \geq 0$ and the systems we construct are nontrivial. Let $\F = \A \cup \B$. As in the previous section, $\A$ contains $c$-sets and $\B$ contains $(c+1)$-sets but, unlike in the previous section, here the sets in $\A$ are more ``balanced'' between $X_1$ and $X_2$ than those in $\B$. Obviously, no set in $\B$ can be a subset of a set in $\A$. Furthermore, no set in $\A$ can be a subset of a set in $\B$ because $\min\{|A \cap X_1|,|A \cap X_2|\} \geq d-u$ for each $A \in \A$ and $\min\{|B \cap X_1|,|B \cap X_2|\} \leq d-u-1$ for each $B \in \B$. Thus $(X,\F)$ is a clutter. Also observe that $|\A| = a(u)$ and $|\B| = b(u)$. Again, all of the notation just defined is implicitly dependent on the values of $(n,k)$.

We will construct a Sperner partition system using the sets in $\F$. Note that each partition in such a system will contain one set from $\A$ and $k-1$ sets from $\B$ and hence such a system can have size at most $Q+1$. Similarly to the previous section, our construction here depends on finding up to $Q$ disjoint triples of sets from $\F$. Here, for each triple $\{E_1,E_2,E_3\}$, we will have $E_1 \in \A$, $E_2,E_3 \in \B$ and $\sum_{i=1}^3|E_i \cap X_w|=3d+1$ for each $w \in \{1,2\}$. Again, we encode this task in an integer program.

\begin{definition}\label{D:IPk-1mod2k}
For integers $k \geq 3$ and $n>2k$ with $k$ odd and $n \equiv k-1 \mod{2k}$, define $\mathcal{I}_{(n,k)}$ to be the integer program on nonnegative integer variables $x_{i,j}$ for all $(i,j) \in \Phi$, where
    \[\Phi=\{(i,j): u+1 \leq i\leq j \leq d \mbox{ and } j-i \leq u\},\]
that maximises $2\sum_{(i,j) \in \Phi} x_{i,j}$ subject to
\begin{alignat}{3}
 \medop\sum_{(i,i+\ell) \in \Phi}x_{i,i+\ell} & \leq \e_\ell \quad && \mbox{for all $\ell \in \{1,\ldots,u\}$}\label{E:C1}\\[1mm]
 \medop\sum_{(i,i) \in \Phi}x_{i,i} & \leq \lfloor\tfrac{1}{2}\e_0\rfloor\label{E:C2}\\[1mm]
 \medop\sum_{(\ell,j) \in \Phi }x_{\ell,j}+\medop\sum_{(i,\ell) \in \Phi }x_{i,\ell} & \leq \e^*_\ell   \quad && \mbox{for all $\ell \in \{u+1,\ldots,d\}$}.\label{E:C3}
\end{alignat}
\end{definition}

When $u=-1$ we have $\Phi=\emptyset$ in the above and we consider $\mathcal{I}_{(n,k)}$ to be a trivial program with optimal value $0$. Again, taking each variable to be 0 in $\mathcal{I}_{(n,k)}$ satisfies all of the constraints and hence a feasible solution exists. Also, twice the sum of \eqref{E:C1} for $\ell \in \{1,\ldots,u\}$ and \eqref{E:C2} has the objective function of $\mathcal{I}_{(n,k)}$ as its left hand side and $Q=2\lfloor\frac{1}{2}a(u)\rfloor$ as its right hand side. Hence the optimal value of $\mathcal{I}_{(n,k)}$ is at most $Q$ and once again each variable must be bounded above.

\begin{lemma}\label{L:rkIPBound}
Let $k \geq 3$ and $n>2k$ be integers with $k$ odd and $n \equiv k-1 \mod{2k}$, and let $p$ be the optimal value of $\mathcal{I}_{(n,k)}$. Then there exists a $(n,k)$-Sperner partition system with $p$ partitions.
\end{lemma}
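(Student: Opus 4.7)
The plan is to follow the scheme of Lemma~\ref{L:r1IPBound} with the roles of $\A$ and $\B$ interchanged: here $\A$ consists of balanced $c$-sets and $\B$ of unbalanced $(c+1)$-sets, and each partition in the system will use one set from $\A$ together with $k-1$ sets from $\B$. The degenerate case $u=-1$ forces $p=0$ and is trivial, so assume $u\geq 0$. Given an optimal solution $\{x_{i,j}:(i,j)\in\Phi\}$ of value $p$, I introduce a colour set $C=\bigcup_{(i,j)\in\Phi}(C_{i,j}\cup C'_{i,j})$ with $|C_{i,j}|=|C'_{i,j}|=x_{i,j}$, so $|C|=p$. Since any permutation of $X_w$ is an automorphism of the clutter $(X,\F)$, by Lemma~\ref{l:colouring} it suffices to produce a partial edge colouring $\gamma_1$ of $(X,\F)$ with colour set $C$ for which $|\gamma_1^{-1}(z)|=k$ and $\sum_{x\in X_w}\deg_z^{\gamma_1}(x)=n/2$ for every $z\in C$ and $w\in\{1,2\}$.

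I first build an auxiliary colouring $\gamma_0$ with exactly three edges per colour class --- one from $\A$ and two from $\B$ --- contributing $3d+1$ to $\sum_{x\in X_w}\deg_z^{\gamma_0}(x)$ for each $w$. For each $(i,j)\in\Phi$ in arbitrary order, assign each $z\in C_{i,j}$ to one previously uncoloured edge from each of $\E_{d+i-j}$, $\E^*_{d-i}$, $\E^*_{d+1+j}$, and each $z\in C'_{i,j}$ to one from each of $\E_{d+j-i}$, $\E^*_{d-j}$, $\E^*_{d+1+i}$. The constraints $u+1\leq i\leq j\leq d$ and $j-i\leq u$ force $d\pm(i-j)\in\{d-u,\ldots,d+u\}$, $d-i,d-j\leq d-u-1$, and $d+1+i,d+1+j\geq d+u+2$, so the chosen edges really lie in $\A$ and $\B$ respectively, and a direct sum of intersection sizes gives $3d+1$ in each $X_w$. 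Availability at each step parallels the three cases in Lemma~\ref{L:r1IPBound}: \eqref{E:C3} bounds the total usage of each $\E^*_{d-\ell}$ and $\E^*_{d+1+\ell}$ for $\ell\in\{u+1,\ldots,d\}$; \eqref{E:C1} bounds that of each $\E_{d-\ell}$ and $\E_{d+\ell}$ for $\ell\in\{1,\ldots,u\}$; and \eqref{E:C2}, with its factor two absorbed by the floor, bounds the joint usage of the single diagonal class $\E_d$ of size $\e_0$ by $C_{i,i}\cup C'_{i,i}$.

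To obtain $\gamma_1$ from $\gamma_0$, I add $(k-3)/2$ disjoint pairs of $\B$-edges to each colour class; this is vacuous when $k=3$ and an integer count for $k\geq 5$ since $k$ is odd. Let $\B^\dag$ be the set of $\B$-edges not coloured by $\gamma_0$. Since each colour class of $\gamma_0$ uses two $\B$-edges and $p\leq Q\leq a(u)\leq b(u)/(k-1)$ by the definition of $u$, we have $|\B^\dag|=b(u)-2p\geq(k-3)p$. Moreover, the $C\leftrightarrow C'$ symmetry in the definition of $\gamma_0$ gives $|\B^\dag\cap\E^*_{d-\ell}|=|\B^\dag\cap\E^*_{d+1+\ell}|$ for each $\ell\in\{u+1,\ldots,d\}$, so I can partition (a subset of) $\B^\dag$ into pairs $\{E,E'\}$ with $E\in\E^*_{d-\ell}$ and $E'\in\E^*_{d+1+\ell}$; each such pair contributes exactly $2d+1$ to the relevant sum over $X_w$. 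Assigning $(k-3)/2$ such pairs to each colour class, with no pair used twice, produces the desired $\gamma_1$: each colour class has $k$ edges and $\sum_{x\in X_w}\deg_z^{\gamma_1}(x)=3d+1+\tfrac{k-3}{2}(2d+1)=n/2$, as required. The only place requiring genuine care beyond transcribing the earlier proof is the size bound $|\B^\dag|\geq(k-3)p$, which invokes the defining inequality $(k-1)a(u)\leq b(u)$ of $u$ in the direction opposite to that used in Section~\ref{S:th4a}; the symmetric pairing is an immediate consequence of the $C\leftrightarrow C'$ structure of $\gamma_0$.
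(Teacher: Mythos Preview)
Your proposal is correct and follows essentially the same approach as the paper's proof: you build the same auxiliary colouring $\gamma_0$ using the same triples from $\E_{d+i-j}$, $\E^*_{d-i}$, $\E^*_{d+1+j}$ (and their $C'$-mirrors), verify availability via \eqref{E:C1}--\eqref{E:C3} in the same way, and extend to $\gamma_1$ by the same symmetric pairing of leftover $\B$-edges. The only cosmetic differences are that you write the degree target as $n/2$ rather than $kd+\tfrac{k-1}{2}$ (these are equal) and phrase the pairing as being on ``a subset of $\B^\dag$'' rather than all of it.
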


\begin{proof}
The result is trivial if $u=-1$, so we may assume that $u \geq 0$ and hence $\A$ is nonempty. Consider an arbitrary optimal solution $\{x_{i,j}:(i,j) \in \Phi\}$. This solution has objective value $p$ where $p \leq Q$. We will use this solution to create a partial edge colouring of the clutter $H=(X,\F)$ with $p$ colours and then apply Lemma~\ref{l:colouring} to construct a Sperner partition system.

Note that any permutation of $X_w$ is an automorphism of $H$ for each $w \in \{1,2\}$. Define a set of colours $C=\bigcup_{(i,j) \in \Phi}(C_{i,j} \cup C'_{i,j})$, where $|C_{i,j}|=|C'_{i,j}|=x_{i,j}$ for each $(i,j) \in \Phi$, and note that $|C|=p$. By Lemma~\ref{l:colouring} it suffices to find a partial edge colouring $\gamma_1$ of $H$ with colour set $C$ such that, for each $z \in C$, $|\gamma^{-1}_1(z)|=k$ and $\sum_{x \in X_w}\deg^{\gamma_1}_z(x)=kd+\frac{k-1}{2}$ for $w \in \{1,2\}$. We first create a partial edge colouring $\gamma_0$ of $H$ with three sets in each colour class which we will later extend to the desired colouring $\gamma_1$. We create this colouring $\gamma_0$ by beginning with all edges of $H$ uncoloured and then choosing certain edges to go in colour classes. We first describe this process and then justify that we can in fact perform it to obtain $\gamma_0$.

For each $(i,j) \in \Phi$ one at a time in arbitrary order we proceed as follows. For each $z \in C_{i,j} \cup C'_{i,j}$ we assign colour $z$ to three previously uncoloured edges:
\begin{itemize}
    \item
one from each of $\E^*_{d-i}$, $\E^*_{d+1+j}$ and $\E_{d+i-j}$ if $z \in C_{i,j}$; and
    \item
one from each of $\E^*_{d-j}$, $\E^*_{d+1+i}$ and $\E_{d+j-i}$ if $z \in C'_{i,j}$.
\end{itemize}
Because $(i,j) \in \Phi$, it can be checked that all the edges we colour are in $\F=\A \cup \B$. Further, observe that we will have $\sum_{w \in X_w}\deg^{\gamma_0}_{z}(x)=3d+1$ for each $w \in \{1,2\}$ and $z \in C$.

After this process is completed for each $(i,j) \in \Phi$, call the resulting colouring $\gamma_0$. We will be able to perform this process provided that we do not attempt to colour more than $|\E_{i}|$ sets in $\E_{i}$ for any $i \in I$ or more than $|\E^*_{i}|$ sets in $\E^*_{i}$ for any $i \in I^*$.

\begin{itemize}
    \item[(i)]
Let $\ell \in \{1,\ldots,u\}$. Each of the $\sum_{(i,i+\ell) \in \Phi}x_{i,i+\ell}$ colours in $\bigcup_{(i,i+\ell) \in \Phi}C_{i,i+\ell}$ is assigned to exactly one of the edges in $\E_{d-\ell}$ and no other colours are assigned to these edges. Similarly, each of the $\sum_{(i,i+\ell) \in \Phi}x_{i,i+\ell}$ colours in $\bigcup_{(i,i+\ell) \in \Phi}C'_{i,i+\ell}$ is assigned to exactly one of the edges in $\E_{d+\ell}$ and no other colours are assigned to these edges. Thus, since $|\E_{d-\ell}|=|\E_{d+\ell}|=\e_\ell$, we do not run out of sets in $\E_{d-\ell}$ or $\E_{d+\ell}$ by \eqref{E:C1}.
\item[(ii)]
Each of the $2\sum_{(i,i) \in \Phi}x_{i,i}$ colours in $\bigcup_{(i,i) \in \Phi}(C_{i,i} \cup C'_{i,i})$ is assigned to exactly one of the edges in $\E_{d}$ and no other colours are assigned to these edges. Thus, since $|\E_{d}|=\e_0$, we do not run out of sets in $\E_{d}$ by \eqref{E:C2}.
    \item[(iii)]
Let $\ell \in \{u+1,\ldots,d\}$. Each of the $\sum_{(\ell,j) \in \Phi}x_{\ell,j}$ colours in $\bigcup_{(\ell,j) \in \Phi}C_{\ell,j}$ and each of the $\sum_{(i,\ell) \in \Phi}x_{i,\ell}$ colours in $\bigcup_{(i,\ell) \in \Phi}C'_{i,\ell}$ is assigned to exactly one of the edges in $\E^*_{d-\ell}$, and no other colours are assigned to these edges. Similarly, each of the $\sum_{(i,\ell) \in \Phi}x_{i,\ell}$ colours in $\bigcup_{(i,\ell) \in \Phi}C_{i,\ell}$ and each of the $\sum_{(\ell,j) \in \Phi}x_{\ell,j}$ colours in $\bigcup_{(\ell,j) \in \Phi}C'_{\ell,j}$ is assigned to exactly one of the edges in $\E^*_{d+1+\ell}$, and no other colours are assigned to these edges. Thus, since $|\E^*_{d-\ell}|=|\E^*_{d+1+\ell}|=\e^*_\ell$, we do not run out of sets in $\E^*_{d-\ell}$ or $\E^*_{d+1+\ell}$ by \eqref{E:C3}.
\end{itemize}

So the colouring $\gamma_0$ does indeed exist. We now extend $\gamma_0$ to the desired colouring $\gamma_1$. Note that if $k=3$ this process will be trivial and $\gamma_1$ will equal $\gamma_0$. Let $\B^\dag$ be the set of all edges in $\B$ that are not coloured by $\gamma_0$. Because $\gamma_0$ has $p$ colour classes, each containing two edges in $\B$, we have $|\B^\dag|=|\B|-2p$. Now $|\B| \geq (k-1)|\A| \geq p(k-1)$ since $|\A|=a(u)$, $|\B|=b(u)$ and $p \leq Q \leq a(u)$. Thus $|\B^\dag| \geq p(k-3)$. For each $\ell \in \{u+1,\ldots,d\}$ we have $|\B^\dag \cap \E^*_{d-\ell}|=|\B^\dag \cap \E^*_{d+1+\ell}|$ by way we defined $\gamma_0$. Thus we can create a partition $\B^\ddag$ of $\B^\dag$ into pairs such that for each pair $\{E,E'\}$ we have $E \in \E^*_{d-\ell}$ and $E' \in \E^*_{d+1+\ell}$ for some $\ell \in \{u+1,\ldots,d\}$. We form $\gamma_1$ from $\gamma_0$ by adding to each colour class the edges from $\frac{k-3}{2}$ pairs in $\B^\ddag$ in such a way that no pair is allocated to two different colour classes. This is possible because $|\B^\ddag|=\frac{1}{2}|\B^\dag| \geq p\frac{k-3}{2}$. We claim that $\gamma_1$ has the required properties. To see this, note that $|\gamma^{-1}_1(z)|=k$ for each $z \in C$ because each colour class in $\gamma_0$ contained 3 edges and had $k-3$ edges added to it to form $\gamma_1$. Further, for each $z \in C$ and $w \in \{1,2\}$, $\sum_{x \in X_w}\deg^{\gamma_1}_z(x)=kd+\frac{k-1}{2}$ because $\sum_{x \in X_w}\deg^{\gamma_0}_z(x)=3d+1$ and $|E \cap X_w|+|E' \cap X_w|=2d+1$ for each pair $\{E,E'\}$ in $\B^\ddag$.
\end{proof}

Next we show that $Q$ is asymptotic to $\MMS(n,k)$. We do this in a different fashion to the proof of Lemma~\ref{L:QAymptoticsk+1mod2k}, one that allows us to also obtain an estimate of $u$ and some other technical results that will be required in our proof of Theorem~\ref{t:k=3Asymptotics}(b). Recall that the error function, denoted $\erf$, is defined for any real number $x$ by $\erf(x)=\int_{0}^{x}\exp(-t^2)\,dt$.

\begin{lemma}\label{L:QAymptoticsk-1mod2k}
Let $n$ and $k$ be integers such that $n \rightarrow \infty$ with $n \equiv k-1 \mod{2k}$, $k=o(n)$, and $k \geq 3$ is odd. Then
\begin{itemize}
    \item[\textup{(a)}]
$\e^*_\ell \sim (k-1)\e_\ell$ for each integer $\ell \geq 0$ such that $\ell=O(\sqrt{d})$;
    \item[\textup{(b)}]
$\e_\ell \sim \e_0\exp(-\frac{k}{d(k-1)}\ell^2)$ for each integer $\ell \geq 0$ such that $\ell=O(\sqrt{d})$;
    \item[\textup{(c)}]
$u \sim \erf^{-1}(\frac{1}{2})\sqrt{d(k-1)/k}$; and
    \item[\textup{(d)}]
$Q \sim \MMS(n,k)$;
\end{itemize}
\end{lemma}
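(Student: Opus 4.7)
The plan is to prove (a)--(d) in sequence, with (a) and (b) giving a local-central-limit-type description of $(\e_\ell)$ and $(\e^*_\ell)$, from which (c) and (d) are obtained by computing a Gaussian half-mass.

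For (a), a direct computation yields
\[
\mfrac{\e^*_\ell}{\e_\ell} = \mfrac{\binom{n/2}{d+1+\ell}}{\binom{n/2}{d+\ell}} = \mfrac{n/2-d-\ell}{d+1+\ell}.
\]
The identity $n/2-d=(k-1)(2d+1)/2$, which follows from $n=(2d+1)k-1$, rewrites this as $(k-1)\cdot\frac{d+1/2-\ell/(k-1)}{d+\ell+1}$, and for $\ell=O(\sqrt d)$ the remaining fraction equals $1+o(1)$, giving $\e^*_\ell\sim (k-1)\e_\ell$. For (b), I use the product form
\[
\mfrac{\e_\ell}{\e_0} = \medop\prod_{i=1}^{\ell}\mfrac{(N-i+1)(d-i+1)}{(N+i)(d+i)},\quad N:=\tfrac{n}{2}-d,
\]
take logarithms and expand via $\log(1-x)=-x+O(x^2)$. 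The linear terms telescope to $-\ell^2(1/N+1/d)$, the quadratic errors sum to $O(\ell^3/d^2)=o(1)$ for $\ell=O(\sqrt d)$, and $1/N+1/d=\frac{k}{d(k-1)}(1+o(1))$ completes the claim.

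For (c), I first convert the defining inequality $(k-1)a(u)\leq b(u)$ into a statement about Gaussian mass. Write $T(u):=a(u)=\e_0+2\sum_{1}^{u}\e_\ell$ and $T^*(u):=\e^*_0+2\sum_{1}^{u}\e^*_\ell$. Vandermonde gives $T(d)=\binom{n}{2d}$, while the factorisation $n-2d=(k-1)(2d+1)$ yields $\binom{n}{2d+1}=(k-1)\binom{n}{2d}$; a direct count of $\binom{n}{2d+1}=\sum_i \binom{n/2}{i}\binom{n/2}{2d+1-i}$ then gives $b(u)=(k-1)T(d)-T^*(u)-\e^*_0$. Part (a) upgrades to $T^*(u)\sim(k-1)T(u)$ once $u=O(\sqrt d)$ is known, so the defining condition collapses to $T(u)\lesssim T(d)/2$, making $u$ asymptotically the largest integer with $T(u)\leq T(d)/2$. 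Part (b) then lets me approximate $T(u)$ by a Riemann sum for the Gaussian integral, giving $T(u)\sim 2\e_0\rho\int_0^{u/\rho}e^{-t^2}\,dt$ and $T(d)\sim \e_0\rho\sqrt{\pi}$ (the latter because $d/\rho\to\infty$ so the integral captures the whole Gaussian), where $\rho=\sqrt{d(k-1)/k}$; solving the ratio $T(u)/T(d)\to 1/2$ yields the stated asymptotic for $u$.

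For (d), a direct computation with $c=2d$, $r=k-1$ and $n-c=(k-1)(2d+1)$ shows $k-r+\frac{r(c+1)}{n-c}=1+1=2$, so $\MMS(n,k)=\binom{n}{2d}/2=T(d)/2$. Combined with $Q\in\{T(u),T(u)-1\}$ and $T(u)\sim T(d)/2$ from (c), this gives $Q\sim \MMS(n,k)$. The main obstacle is the Gaussian integral approximation in (c), which requires two things beyond the local estimate in (b): a tail bound on $\e_\ell$ for $\ell\gg\sqrt d$ to show that the contribution beyond the $O(\sqrt d)$ window to $T(d)$ is negligible as the window widens, and a bootstrap showing $u=O(\sqrt d)$ so that (a) and (b) apply at $\ell=u$. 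I expect both to follow from a ratio-test estimate of the form $\e_{\ell+1}/\e_\ell\leq 1-\Omega(\ell/d)$, which implies super-polynomial decay of $\e_\ell$ for $\ell$ much larger than $\rho$.
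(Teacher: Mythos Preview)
Your proposal is correct and follows essentially the same Gaussian-approximation strategy as the paper, with a few tactical differences worth noting.

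For (b), you use the telescoping product and a Taylor expansion of the logarithm; the paper instead applies Stirling's approximation \eqref{E:StirlingApprox} directly to the four binomial coefficients in $\e_\ell/\e_0$ and regroups the resulting expression into four factors each of the form $(1-x/m)^{-m}$. Both routes are standard and lead to the same estimate.

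For (c), the paper sidesteps both obstacles you flag. Rather than first proving $u=O(\sqrt d)$ and then applying (a)--(b) at $\ell=u$, the paper fixes an arbitrary constant $\kappa>0$, sets $u_\kappa=\kappa\sqrt{d(k-1)/k}$, and evaluates $a(\lfloor u_\kappa\rfloor)$ and $b(\lfloor u_\kappa\rfloor)$ directly via the Riemann-sum argument; since these sums only run up to $\Theta(\sqrt d)$, parts (a) and (b) apply throughout with no bootstrap needed. One finds $a(\lfloor u_\kappa\rfloor)\sim\binom{n}{2d}\erf(\kappa)$ and $b(\lfloor u_\kappa\rfloor)\sim(k-1)\binom{n}{2d}(1-\erf(\kappa))$, so the defining inequality $(k-1)a\leq b$ holds at $\lfloor u_\kappa\rfloor$ for $\kappa<\erf^{-1}(\tfrac12)$ and fails for $\kappa>\erf^{-1}(\tfrac12)$; monotonicity of $a$ and $b$ then squeezes $u$. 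This also removes the need for a separate tail bound: the paper never sums $\e_\ell$ out to $\ell=d$, and the identification $\e_0\sqrt{\pi d(k-1)/k}\sim\binom{n}{2d}$ is obtained by applying Stirling to $\e_0$ and $\binom{n}{2d}$ rather than by extending the Riemann sum. Your ratio-test route would also work, but the squeeze argument is cleaner.
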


\begin{proof}
Let $n \equiv k-1 \mod{2k}$ be a positive integer. Observe that $d \rightarrow \infty$ since $k=o(n)$. It will often be useful to note that $n-2d=(2d+1)(k-1)$.

For each integer $\ell \geq 0$ such that $\ell=O(\sqrt{d})$,
\[\e^*_\ell = \mfrac{n-2d-2\ell}{2d+2\ell+2}\e_\ell \sim (k-1)\e_\ell\]
where the equality follows by the definitions of $\e_\ell$ and $\e^*_\ell$ and the $\sim$ follows because $n-2d=(2d+1)(k-1)$ and $\ell=O(\sqrt{d})$. So (a) holds.

For each integer $\ell \geq 0$ such that $\ell=O(\sqrt{d})$,
\begin{align*}
\frac{\e_\ell}{\e_0}\sim{}& \frac{d^{2d+1}(\frac{n}{2}-d)^{n-2d+1}}{(d-\ell)^{d-\ell+1/2}(d+\ell)^{d+\ell+1/2}(\frac{n}{2}-d-\ell)^{(n+1)/2-d-\ell}(\frac{n}{2}-d+\ell)^{(n+1)/2-d+\ell}} \\[1mm]
={}& \Big(\mfrac{d^2}{d^2-\ell^2}\Big)^{d-\ell+1/2} \Big(\mfrac{(\frac{n}{2}-d)^2}{(\frac{n}{2}-d)^2-\ell^2}\Big)^{(n+1)/2-d-\ell} \left(\mfrac{d}{d+\ell}\right)^{2\ell} \left(\mfrac{\frac{n}{2}-d}{\frac{n}{2}-d+\ell}\right)^{2\ell} \\[1mm]
={}& \Big(\mfrac{d}{d-\frac{1}{d}\ell^2}\Big)^{d-\ell+1/2} \Big(\mfrac{\frac{n}{2}-d}{\frac{n}{2}-d-\frac{1}{n/2-d}\ell^2}\Big)^{(n+1)/2-d-\ell} \left(\mfrac{\ell}{\ell+\frac{1}{d}\ell^2}\right)^{2\ell} \left(\mfrac{\ell}{\ell+\frac{1}{n/2-d}\ell^2}\right)^{2\ell} \\[1mm]
\sim{}& \exp\left(\tfrac{1}{d}\ell^2\right)\, \exp\left(\tfrac{1}{d(k-1)}\ell^2\right)\, \exp\left(-\tfrac{2}{d}\ell^2\right)\, \exp\left(-\tfrac{2}{d(k-1)}\ell^2\right) \\[1mm]
={}& \exp\left(-\tfrac{k}{d(k-1)}\ell^2\right)
\end{align*}
where the first line follows by applying \eqref{E:StirlingApprox} and simplifying, and the fourth line follows by applying limit identities, remembering that $n-2d=(2d+1)(k-1)$ and $\ell = O(\sqrt{d})$. So we have proved (b). In particular, note that $\e_\ell=\Theta(\e_0)$.

For any positive real constant $\kappa>0$, let $u_{\kappa}=\kappa\sqrt{d(k-1)/k}$ and note that $u_{\kappa}=\Theta(\sqrt{d})$. Now,
\begin{align}
  a(\lfloor u_\kappa \rfloor) &= \e_0+2\medop\sum_{\ell=1}^{\lfloor u_\kappa \rfloor}\e_\ell \nonumber\\
  &\sim \e_0\biggl(1+2\medop\sum_{\ell=1}^{\lfloor u_\kappa \rfloor}\exp\left(-\tfrac{k}{d(k-1)}\ell^2\right)\biggr) \nonumber\\
  &\sim 2\e_0\,\biggl(\medop\sum_{\ell=0}^{\lfloor u_\kappa \rfloor}\exp\left(-\tfrac{k}{d(k-1)}\ell^2\right)\biggr) \nonumber\\
  &\sim 2\e_0\,\int_{0}^{u_\kappa} \exp\left(-\tfrac{k}{d(k-1)}\ell^2\right)\,d\ell \nonumber\\
  &= \e_0\sqrt{\tfrac{k-1}{k}d\pi}\,\erf(\kappa) \nonumber\\
  &\sim \mbinom{n}{2d}\,\erf(\kappa)  \label{E:aExpr}
\end{align}
where in the second line we used part (b) of this lemma and the definition of the function $a$, in the fourth line we approximated the sum with an integral, in the fifth we changed the variable of integration to $t=\sqrt{k/d(k-1)}\ell$ and applied the definition of $u_\kappa$, and in the last we applied \eqref{E:StirlingApprox} to $\e_0$ and $\binom{n}{2d}$ and performed a routine calculation recalling $n \sim 2kd$. In the first three lines, recall that $u_{\kappa}=\Theta(\sqrt{d})$ and the terms of the sum are comparable and hence each term is insignificant compared to the whole. On the other hand,
\begin{align}
  b(\lfloor u_\kappa \rfloor) &= \mbinom{n}{2d+1}-2\medop\sum_{\ell=0}^{\lfloor u_\kappa \rfloor}\e_\ell^* \nonumber\\
  &\sim (k-1)\left(\mbinom{n}{2d} - 2\medop\sum_{\ell=0}^{\lfloor u_\kappa \rfloor}\e_\ell\right) \nonumber\\
  &\sim (k-1)\left(\mbinom{n}{2d} - a(\lfloor u_\kappa \rfloor)\right) \nonumber\\
  &\sim (k-1)\mbinom{n}{2d}(1-\erf(\kappa)) \label{E:bExpr}
\end{align}
where the first line follows from the definition of $b$, the second using part (a) of this lemma and the fact that $\binom{n}{2d+1}=(k-1)\binom{n}{2d}$ since $n-2d=(2d+1)(k-1)$, the third by the definition of $a$ and because any term of the sum is insignificant compared to the whole, and the last by \eqref{E:aExpr}. Let $\kappa_0=\erf^{-1}(\frac{1}{2})$. For any $\kappa < \kappa_0$, using \eqref{E:aExpr} and \eqref{E:bExpr}, we have $(k-1)a(\lfloor u_\kappa \rfloor) <b(\lfloor u_\kappa \rfloor)$ and hence $u \geq \lfloor u_\kappa \rfloor$ by the definition of $u$. Similarly, $(k-1)a(\lfloor u_\kappa \rfloor) > b(\lfloor u_\kappa \rfloor)$ and hence $u < \lfloor u_\kappa \rfloor$ for any $\kappa > \kappa_0$. This proves (c). Finally, we have $Q \sim a(u)\sim\frac{1}{2}\binom{n}{2d}$ using \eqref{E:aExpr} and the fact that $u \sim u_{\kappa_0}$ by part (c) of this lemma. Since $c=2d$, $r=k-1$ and $n-2d=(2d+1)(k-1)$ in this case, $\MMS(n,k)=\frac{1}{2}\binom{n}{2d}$ and the proof of (d) is complete.
\end{proof}

\begin{lemma}\label{L:IPachievesk-1}
Let $n$ and $k$ be integers such that $n \rightarrow \infty$ with $n \equiv k-1 \mod{2k}$, $k=o(n)$, and $k \geq 5$ is odd. Then the optimal value of $\mathcal{I}_{(n,k)}$ is $Q$.
\end{lemma}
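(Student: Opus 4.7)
The plan is to show that the integer program $\mathcal{I}_{(n,k)}$ attains its upper bound $Q$ by constructing a feasible integer solution of value $Q$ via an LP rounding argument. First I will observe that the LP relaxation of $\mathcal{I}_{(n,k)}$ has optimum $Q$: the dual LP admits the feasible solution $y_\ell = 2$ for $\ell \in \{0, \ldots, u\}$ and $z_\ell = 0$ for $\ell \in \{u+1, \ldots, d\}$, whose objective equals $Q$ and hence upper-bounds the primal LP by $Q$. I then exhibit a matching fractional primal solution and round it.

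To build the fractional solution, pick a cutoff integer $T$ of order $\sqrt{d}$ (to be chosen precisely) and set
\begin{align*}
x^*_{i,i+\ell'} &= \mfrac{\e_{\ell'}}{T - u - \ell'} && \text{for $\ell' \in \{1, \ldots, u\}$ and $u+1 \leq i \leq T-\ell'$,}\\
x^*_{i,i} &= \mfrac{\lfloor\e_0/2\rfloor}{T-u} && \text{for $u+1 \leq i \leq T$,}
\end{align*}
with all other variables set to zero. Constraints \eqref{E:C1} and \eqref{E:C2} are saturated by construction, so the objective is $Q$. The load imposed on constraint \eqref{E:C3} at vertex $\ell$ is $0$ if $\ell > T$ and at most $a(u)/T$ if $\ell \leq T$; since $\e^*_\ell$ is decreasing in $\ell$ in the relevant range, the binding inequality reduces to $T\e^*_T \geq a(u)$.

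To convert to an integer solution, round each $x^*_{i,j}$ to an adjacent integer so that the diagonal sums remain the integers $\e_{\ell'}$ and $\lfloor\e_0/2\rfloor$; this is always possible, keeps the objective equal to $Q$, and perturbs each variable by strictly less than $1$. Since each vertex of the interval graph is incident to at most $2u+1 = O(\sqrt{d})$ variables, each vertex load changes by at most $O(\sqrt{d})$ under this rounding, so feasibility survives provided the fractional slack in each \eqref{E:C3} grows faster than $\sqrt{d}$ as $n \to \infty$.

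The principal obstacle is choosing $T$ so that the fractional slack is large enough, and this is where the hypothesis $k \geq 5$ becomes essential. Combining the Gaussian estimate $\e^*_T \sim (k-1)\e_0\exp(-kT^2/((k-1)d))$ coming from Lemma~\ref{L:QAymptoticsk-1mod2k}(a),(b), the relation $a(u) \sim \tfrac{1}{2}\binom{n}{2d}$ from part~(d), and the Stirling comparison $\e_0 \sim \sqrt{k/((k-1)\pi d)}\binom{n}{2d}$, a short calculation gives
\[
\lim_{n \to \infty} \max_T \mfrac{T\e^*_T}{a(u)} \;=\; (k-1)\sqrt{\tfrac{2}{\pi e}},
\]
the maximum being attained near $T \sim \sqrt{(k-1)d/(2k)}$. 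The right-hand side exceeds $1$ precisely when $k > 1 + \sqrt{\pi e/2} \approx 3.07$, so for every odd $k \geq 5$ one can pick $T$ with $T\e^*_T \geq (1+\delta) a(u)$ for some fixed $\delta > 0$. The resulting fractional slack is of order $\binom{n}{2d}/\sqrt{d}$, which comfortably dominates the $O(\sqrt{d})$ rounding error for $n$ sufficiently large, completing the proof.
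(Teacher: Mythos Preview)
Your argument has a genuine gap at the point where you bound the load in \eqref{E:C3} and choose $T$.  By Lemma~\ref{L:QAymptoticsk-1mod2k}(c) we have $u \sim \erf^{-1}(\tfrac12)\sqrt{(k-1)d/k}\approx 0.477\sqrt{(k-1)d/k}$, whereas your optimising choice gives $T\sim \tfrac{1}{\sqrt{2}}\sqrt{(k-1)d/k}\approx 0.707\sqrt{(k-1)d/k}$.  Hence $T<2u+1$ for all large $n$, and for every $\ell'$ with $T-u\leq \ell'\leq u$ the range $u+1\leq i\leq T-\ell'$ in your definition of $x^*_{i,i+\ell'}$ is empty.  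Those diagonals therefore carry no mass, the corresponding constraints \eqref{E:C1} are left with full slack $\e_{\ell'}$, and your objective is only $2\lfloor\tfrac12\e_0\rfloor+2\sum_{\ell'=1}^{T-u-1}\e_{\ell'}\sim \erf(0.23)\binom{n}{2d}\approx 0.51\,Q$, not $Q$.  The bound ``load $\leq a(u)/T$'' that led you to this $T$ is itself incorrect: a vertex $\ell$ can pick up \emph{two} terms $\e_{\ell'}/(T-u-\ell')$ from each diagonal $\ell'$, so the true upper bound on the load is $a(u)/(T-2u)$, which is vacuous once $T<2u$.  Even if you insist $T\geq 2u+1$, the denominators $T-u-\ell'$ for $\ell'$ near $u$ are $O(1)$ rather than $\Theta(\sqrt{d})$, so single-term contributions already have size $\Theta(\e_u)=\Theta(\e_0)$ and a uniform ``averaged'' bound of the shape $a(u)/T$ cannot hold.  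Your asymptotic computation of $\max_T T\e^*_T/a(u)=(k-1)\sqrt{2/(\pi e)}$ is correct, but it answers the wrong question.

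For comparison, the paper's proof avoids spreading and rounding entirely: it places the whole mass $\e_{\ell'}$ of each diagonal $\ell'\in\{1,\ldots,u\}$ on a \emph{single} variable (and $\lfloor\tfrac12\e_0\rfloor$ on one more), with the endpoints chosen so that each vertex $\ell\in\{u+1,\ldots,2u+1\}$ meets at most two of these variables.  The constraint \eqref{E:C3} at $\ell$ then reduces to an inequality of the form $\e^*_\ell\geq \e_{\ell_1}+\e_{\ell_2}$, which is verified directly from the Gaussian estimates of Lemma~\ref{L:QAymptoticsk-1mod2k}(a),(b); this is precisely where the hypothesis $k\geq 5$ enters.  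If you wish to salvage your LP-rounding approach, you would need a different spreading scheme that keeps all diagonals occupied while genuinely controlling the maximum vertex load --- the uniform scheme with a single cutoff $T$ does not achieve this.
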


\begin{proof}
Observe that $d \rightarrow \infty$ since $k=o(n)$. For $t \in \{1,\ldots,u\}$, let $\alpha_t$ be the slack in the constraint of $\mathcal{I}_{(n,k)}$ given by setting $\ell=t$ in \eqref{E:C1}, and let $\alpha_0$ be the slack in constraint \eqref{E:C2} of $\mathcal{I}_{(n,k)}$. Similarly, for $t \in \{u+1,\ldots,d\}$, let $\beta_{t}$ be the slack in the constraint of $\mathcal{I}_{(n,k)}$ given by setting $\ell=t$ in \eqref{E:C3}. Consider the candidate solution for $\mathcal{I}_{(n,k)}$ in which every variable is 0 except
\begin{itemize}
    \item
$x_{u+1+i,2u+1-i}=\e_{u-2i}$ for each $i \in \{0,\ldots,\lfloor\frac{u-1}{2}\rfloor\}$,
    \item
$x_{u+1+i,2u-i}=\e_{u-2i-1}$ for each $i \in \{0,\ldots,\lfloor\frac{u-2}{2}\rfloor\}$,
    \item
$x_{\lfloor3u/2\rfloor+1,\lfloor3u/2\rfloor+1}=\lfloor\frac{1}{2}\e_{0}\rfloor$.
\end{itemize}
We claim that this is indeed a feasible solution with objective value $Q$. Note that, for this assignment, $\alpha_{\ell}=0$ for each $\ell \in \{0,\ldots,u\}$. Because twice the sum of the constraints \eqref{E:C1} for $\ell \in \{1,\ldots,u\}$ and \eqref{E:C2} has the objective function of $\mathcal{I}_{(n,k)}$ as its left hand side and $Q=2\lfloor\frac{1}{2}a(u)\rfloor$ as its right hand side, this assignment has objective value $Q$. So it only remains to show that $\beta_{\ell} \geq 0$ for each $\ell \in \{u+1,\ldots,d\}$ and hence that the solution is feasible. For each $\ell \in \{2u+2,\ldots,d\}$, \eqref{E:C3} is clearly satisfied since its left hand side is 0. Furthermore, for each $\ell \in \{u+1,\ldots,2u+1\}$, it can be seen that at most two of the variables contributing to the left hand side are nonzero. In fact, we have
\begin{align}
\beta_{2u+1-i} &=\e^*_{2u+1-i}-\e_{u-2i+1}-\e_{u-2i} & \text{for each $i \in \{1,\ldots,\lfloor\tfrac{u-1}{2}\rfloor\}$} \label{e:beta1}\\
\beta_{u+1+i} &=\e^*_{u+1+i}-\e_{u-2i}-\e_{u-2i-1} & \text{for each $i \in \{0,\ldots,\lfloor\tfrac{u-2}{2}\rfloor\}$} & \label{e:beta2}\\
\beta_{2u+1} &=\e^*_{2u+1}-\e_{u} & \label{e:beta3}\\
\beta_{\lfloor3u/2\rfloor+1} &= \e^*_{\lfloor3u/2\rfloor+1}-\e_{1}-2\lfloor\tfrac{1}{2}\e_{0}\rfloor. & \label{e:beta4}
\end{align}

We now obtain two expressions which we will use to approximate, respectively, the positive and negative terms in the expressions on the right hand sides of \eqref{e:beta1}--\eqref{e:beta4}. For each $\ell \in \{0,\ldots,2u+1\}$, we have from Lemma~\ref{L:QAymptoticsk+1mod2k}(a) and (b) that
\begin{equation}\label{e:posExpr}
\e^*_\ell \sim  (k-1)\e_\ell  \sim (k-1)\e_0\exp\left(-\tfrac{k}{d(k-1)}\ell^2\right).
\end{equation}
Also, for each $\ell \in \{0,\ldots,u\}$, using Lemma~\ref{L:QAymptoticsk+1mod2k}(b) we obtain
\begin{equation}\label{e:negExpr}
\e_\ell+\e_{\ell+1} < 2\e_\ell \sim 2\e_0\exp\left(-\tfrac{k}{d(k-1)}\ell^2\right).
\end{equation}
Using \eqref{e:posExpr} and \eqref{e:negExpr} it is not too difficult to establish that the expressions in \eqref{e:beta1}--\eqref{e:beta4} are nonnegative in each case. We demonstrate how to do this in the case of \eqref{e:beta1}, where the expressions come closest to being negative.

Let $i$ be an arbitrary element of $\{1,\ldots,\lfloor\tfrac{u-1}{2}\rfloor\}$. Using \eqref{e:beta1}, \eqref{e:posExpr} and \eqref{e:negExpr} and then factorising, we have
\begin{align}
\beta_{2u+1-i} > \e^*_{2u+1-i}-2\e_{u-2i} &\sim (k-1)\e_{2u+1-i}-2\e_{u-2i} \nonumber\\
&\sim(k-1)\e_0\exp\left(-\tfrac{k(2u+1-i)^2}{d(k-1)}\right) - 2\e_0\exp\left(-\tfrac{k(u-2i)^2}{d(k-1)}\right) \nonumber\\
&=\e_0\exp\left(-\tfrac{k(2u+1-i)^2}{d(k-1)}\right) \left(k-1-2\exp\left(\tfrac{k(3u+1-3i)(u+1+i)}{d(k-1)}\right)\right). \label{e:factoredSlack}
\end{align}
The first exponential in \eqref{e:factoredSlack} approaches a positive constant since $u=\Theta(\sqrt{d})$ by Lemma~\ref{L:QAymptoticsk-1mod2k}(c), so to prove that $\beta_{2u+1-i} \geq 0$ it suffices to show that the second exponential approaches a constant strictly less that $\frac{k-1}{2}$. Now
\[\tfrac{k(3u+1-3i)(u+1+i)}{d(k-1)} \leq \tfrac{k(3u-2)(u+2)}{d(k-1)} \sim \tfrac{3ku^2}{d(k-1)} \sim 3\bigl(\erf^{-1}(\tfrac{1}{2})\bigr)^2\]
where the first inequality follows because $(3u+1-3i)(u+1+i)$ is maximised when $i=1$ given that $i \geq 1$, the first $\sim$ follows because $u=\Theta(\sqrt{d})$ and the second by Lemma~\ref{L:QAymptoticsk-1mod2k}(c). Thus, because $\erf^{-1}(\tfrac{1}{2})<0.477$, it is easy to calculate that the second exponential in \eqref{e:factoredSlack} approaches a constant less than $1.98$. Since $k \geq 5$, this is less than $\frac{k-1}{2}$. Thus $\beta_{2u+1-i} \geq 0$.
Very similar arguments show that the expressions in \eqref{e:beta2}--\eqref{e:beta4} are nonnegative in each case.
\end{proof}

\begin{proof}[\textbf{\textup{Proof of Theorem~\ref{t:k=3Asymptotics}(b).}}]
This follows immediately from Lemmas~\ref{L:rkIPBound}, \ref{L:QAymptoticsk-1mod2k}(d) and \ref{L:IPachievesk-1}.
\end{proof}

\section{Conclusion}

As a result of Theorem~\ref{t:k=3Asymptotics}(b), the asymptotics of $\SP(n,k)$ when $k = o(n)$ is odd and $n \equiv k-1 \mod{2k}$ are only unknown when $k=3$. We believe the statement is still true in this case.
\begin{conjecture}
Let $n$ be an integer such that $n \rightarrow \infty$ with $n \equiv 2 \mod{6}$. Then $\SP(n,3) \sim \MMS(n,3)$.
\end{conjecture}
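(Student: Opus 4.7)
The plan is to apply the framework of Section~\ref{S:th4b} in the case $k=3$. By Lemma~\ref{L:rkIPBound} and Lemma~\ref{L:QAymptoticsk-1mod2k}(d), it suffices to show that the optimal value of $\mathcal{I}_{(n,3)}$ is $Q(1-o(1))$. A direct adaptation of Lemma~\ref{L:IPachievesk-1} fails: the concentrated candidate solution constructed there violates the \eqref{E:C3} constraints when $k=3$, because the factor $k-1-2\exp(3(\erf^{-1}(\tfrac{1}{2}))^2)$ appearing in \eqref{e:factoredSlack} evaluates to approximately $2-3.94<0$. So a different, more spread-out solution is needed.

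I would approach this by first showing via LP duality that the LP relaxation of $\mathcal{I}_{(n,3)}$ has optimum $Q(1-o(1))$, and then rounding to an integer solution. The LP admits a transportation interpretation: each $\ell'\in\{0,\ldots,u\}$ has supply $s_{\ell'}$ (with $s_0=\tfloor{\tfrac{1}{2}\e_0}$ and $s_{\ell'}=\e_{\ell'}$ otherwise), each $\ell\in\{u+1,\ldots,d\}$ has capacity $\e^*_\ell$, and each unit of supply at $\ell'$ is routed via some pair $(i,i+\ell')\in\Phi$, consuming one unit of capacity at each of $i$ and $i+\ell'$. By Lemma~\ref{L:QAymptoticsk-1mod2k}(d), the total supply $a(u)/2$ matches the total capacity $b(u)/2\sim a(u)$ for $k=3$, so the LP is delicately balanced. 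The trivial dual $y\equiv 0$, $z\equiv 1$ is feasible with value $\sum_\ell\e^*_\ell\sim Q$, giving LP primal $\leq Q(1+o(1))$; and analysing local perturbations of the dual (using that any decrease of $z_\ell$ by $\delta$ must be compensated by an increase of $y$ which costs $2\delta\e_\ell$, exactly matching the saving $\delta\e^*_\ell\sim 2\delta\e_\ell$ for $k=3$) shows that no dual can improve on this, so LP $=Q(1-o(1))$.

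For an explicit fractional primal realising this optimum I would propose $x_{i,i+\ell'}=s_{\ell'}\,\e^*_i\,\e^*_{i+\ell'}/N_{\ell'}$ with $N_{\ell'}=\sum_i\e^*_i\e^*_{i+\ell'}$. This saturates the supply constraints \eqref{E:C1}, \eqref{E:C2} by construction, and feasibility of the \eqref{E:C3} constraints reduces to verifying $\sum_{\ell'=0}^{u}\e_{\ell'}(\e^*_{\ell+\ell'}+\e^*_{\ell-\ell'})/N_{\ell'}\leq 1-o(1)$ for each $\ell\in\{u+1,\ldots,d\}$, to be attacked using the Gaussian estimates $\e_\ell\sim\e_0\exp(-3\ell^2/(2d))$ and $\e^*_\ell\sim 2\e_\ell$ of Lemma~\ref{L:QAymptoticsk-1mod2k}. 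Once a fractional primal with value $Q(1-o(1))$ is in hand, rounding (either by flooring plus a matrix-balancing argument in the style of Lemma~\ref{l:balancedmatrix}, or by randomised rounding with a concentration bound) loses at most $O(|\Phi|)=O(d^{3/2})$ in the objective, which is negligible against $Q=\Theta(\binom{n}{2d})$.

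The main obstacle is the asymptotic analysis required to certify feasibility of the fractional primal: because supply and capacity agree at leading order for $k=3$, every constraint must hold up to a factor $1-o(1)$, with no room for slack. The convolutions $\sum_{\ell'}\e_{\ell'}\e^*_{\ell\pm\ell'}/N_{\ell'}$ range over $\ell'\in\{0,\ldots,u\}$ with $u=\Theta(\sqrt{d})$, precisely the crossover regime where neither the bulk nor the tail Gaussian approximation is cleanly applicable, and boundary effects near $\ell=u+1$ (where the capacity peaks) and $\ell$ close to $d$ (reachable only by pairs of small difference $\ell'$) require delicate handling. Should the proposed profile prove inadequate, a fallback is to solve the Wiener--Hopf-type equation $(a(u)/2)g(\ell)+(s*g)(\ell)=\e^*_\ell(1-o(1))$ on $\{u+1,\ldots,d\}$ to determine an optimal density $g$ directly.
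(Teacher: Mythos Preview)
This statement is presented in the paper as a \emph{conjecture}, not a theorem; the paper does not prove it. Instead, the paper offers only numerical evidence: the LP relaxation $\mathcal{L}_{(n,3)}$ was solved in Gurobi for all $n \equiv 2 \mod{6}$ with $26 \leq n \leq 18000$, and in each case the optimal value matched $Q$ to at least 11 significant figures. There is therefore no ``paper's own proof'' to compare against.

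Your proposal is a sensible research plan and it correctly isolates the difficulty, but it does not close the gap the paper leaves open. You rightly observe that the framework of Section~\ref{S:th4b} reduces the conjecture to showing that $\mathcal{I}_{(n,3)}$ (equivalently its LP relaxation, up to $O(|\Phi|)$) has optimal value $Q(1-o(1))$, and that the candidate solution of Lemma~\ref{L:IPachievesk-1} fails for $k=3$ because the factor $k-1-2\exp(3(\erf^{-1}(\tfrac12))^2)$ is negative. However, your dual argument does not establish the claimed lower bound: showing that the trivial dual $(y\equiv 0,\,z\equiv 1)$ cannot be improved by \emph{local} perturbations says nothing about the dual optimum, since LP optima are attained at vertices and a first-order balance condition at a non-vertex point carries no information about global optimality. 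You then propose an explicit fractional primal $x_{i,i+\ell'}=s_{\ell'}\e^*_i\e^*_{i+\ell'}/N_{\ell'}$ and, to your credit, correctly identify that verifying the \eqref{E:C3} constraints for this profile is the crux --- and that for $k=3$ total supply and total capacity agree at leading order, so there is no slack to absorb the error terms in the Gaussian approximations of Lemma~\ref{L:QAymptoticsk-1mod2k}. But you do not carry out that verification, and the Wiener--Hopf fallback is a heuristic direction rather than an argument. In short, you have accurately diagnosed why the $k=3$ case resists the method of Lemma~\ref{L:IPachievesk-1} and sketched plausible avenues, but the proposal remains a program, not a proof, which is consistent with the statement's status as an open conjecture.
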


In an attempt to find evidence supporting this conjecture, we observed that if we relax one of the integer programs $\mathcal{I}_{(n,k)}$ to a linear program $\mathcal{L}_{(n,k)}$, then the optimal value of $\mathcal{L}_{(n,k)}$ exceeds the optimal value of $\mathcal{I}_{(n,k)}$ by at most $2|\Phi|<2u(d-u)$. This is because a feasible solution for $\mathcal{I}_{(n,k)}$ can be obtained from an optimal solution for $\mathcal{L}_{(n,k)}$ by simply taking the floor of each variable. So clearly the two optimal values are asymptotic to each other as $n$ becomes large. Thus we implemented the linear relaxation $\mathcal{L}_{(n,k)}$ in the linear programming solver Gurobi \cite{gurobi}. We proceeded to solve $\mathcal{L}_{(n,3)}$ for all $n\equiv 2 \mod{6}$ where $26 \leq n \leq 18000$ (the program is trivial for $n<26$), and in all cases the optimal solution found gave an objective value that matched $Q$ to at least 11 significant figures. Since the objective value for $\mathcal{L}_{(n,3)}$ is asymptotic to the objective value of $\mathcal{I}_{(n,3)}$, and in view of Lemma~\ref{L:rkIPBound} and Lemma~\ref{L:QAymptoticsk-1mod2k}(d), these calculations support our belief that the conjecture holds.

In view of the work in this paper and \cite{Paper1} it has been established that $\SP(n,k)$ is asymptotic to $\MMS(n,k)$ in a very wide variety of cases, with the main unresolved situation being when $n$ is odd, $k=o(n)$ and $k-r$ is bounded. It seems likely that addressing these cases will require developing new constructions. Of course, it is also of interest to find exact values of $\SP(n,k)$. Table~\ref{table:exactValues} and \cite[Lemma 16]{Paper1} and  show that the upper bound on $\SP(n,k)$ given by Theorem~\ref{t:upperBound} is sometimes tight but we do not know how often this is the case.

\section*{Acknowledgments}\label{ackref}
Adam~Gowty received support from an Australian Government Research Training Program Scholarship. Daniel~Horsley received support from ARC grants DP150100506 and FT160100048.

\bibliographystyle{abbrvnat}
\bibliography{bibliography}

\end{document}